\documentclass[10pt,a4paper]{article}

\usepackage{amsmath,amssymb,latexsym,color,dsfont,mleftright}
 \usepackage{authblk,mathtools, nccmath,subcaption}
 \usepackage{graphicx,float}
 \usepackage[square,numbers]{natbib}
 \usepackage{natbib}
 \usepackage{tikz-cd}
  \usepackage{bbm}
 \usepackage[cal=boondox]{mathalfa}
  \usepackage{cleveref}
\usepackage{ntheorem} 
\usepackage[colorinlistoftodos]{todonotes}
\usepackage{CJKutf8}
\usepackage{float}
\usepackage[export]{adjustbox}
\usepackage[T1]{fontenc}
\usepackage{esint}
\usepackage{multirow}
\usepackage{booktabs}
\usepackage{changepage} 
\usepackage{array}

\newtheorem{theorem}{Theorem}

\newtheorem{lemma}{Lemma}
\newtheorem{proposition}{Proposition}

\newtheorem{definition}{Definition}
\newtheorem{coro}{Corollary}

\newtheorem{problem}{Problem}
\newtheorem{remark}{Remark}

\newcommand{\lqqd}{\hfill{$\Box$}\bigskip}
\newcommand{\CC}{\mathbb{C}}

\newcommand{\RR}{\mathbb{R}}
\newcommand{\ZZ}{\mathbb{Z}}

\newcommand{\dsty}{\displaystyle}
\newcommand{\proof}{\bigskip\noindent {\sc Proof.  }}

\def\bbbuildrel#1_#2{\mathrel{
\mathop{\kern 0pt#1}\limits_{#2}}}

\begin{document}

\title{Liouvillian and Analytic Integrability of a Generalized Gause System }

\author[1]{Jorge A. Borrego-Morell}

\affil[1]{Universidade Federal do Rio de Janeiro, Campus Duque de Caxias, Rio de Janeiro, Brazil\newline
{\small \texttt{Email: jborrego@caxias.ufrj.br; ORCID: 0000-0001-8871-2739}}}
\maketitle

\begin{abstract}

 In this work, we identify the regions of the parameter space in which  a predator–prey system—derived from the classical Gause model with a generalized Holling response function and logistic prey growth in the absence of predators—fails to be Liouvillian integrable. Although the model parameters have biological meaning only when restricted to appropriate real domains, our analysis is carried out in the complex setting, which provides a unified algebraic framework; the resulting nonintegrability conditions remain valid in the biologically relevant regime. As a consequence, we establish the nonintegrability of an Abel differential equation of the second kind with polynomial coefficients obtained from the system. Finally, we analyze the existence of a local analytic first integral in neighborhoods of the  equilibrium points.
 
\end{abstract}

\noindent {\it MSC2020 classes:}   Primary 34A34, 34CA05.\\

\noindent {\it Key words and phrases:}  Darboux polynomial, exponential factor, Liouvillian first integral, analytic first
integral, generalized Gause system, Abel differential equations. 

\section{Introduction}

The aim of this paper is to study the Liouvillian integrability and analyticity
of solutions of a generalized predator–prey Gause system \cite[Ch.~4]{Fri80}
\begin{equation}\label{Ga1}
  \begin{cases}
    \dot{x}=  x g(x) - y p(x), \\
    \dot{y} =  y \left(-\gamma + c\, p(x)\right),
  \end{cases}
\end{equation}
where $g$ denotes the specific growth rate of the prey in the absence of
predators, and $p$ is the functional response of the predator with respect to
that prey. We assume that the prey follows a logistic growth law,
$g(x)=r\left(1-\frac{x}{k}\right)$ \cite[Ch.~1]{Fri80}, and that the predation is
described by a  generalized Holling functional response
\[
    p(x)=\frac{\alpha x^m}{\beta + x^m}, \qquad m\in\mathbb{Z}_{>0},
\]
in  particular, when 
$m=1$, system \eqref{Ga1} reduces to the classical Rosenzweig–MacArthur model \cite[p.~95]{Tu13}.
 
From a biological perspective, the functional response defined  above requires $\beta>0$, because  $\beta$ plays the role of a half-saturation constant.
Nonetheless, for mathematical completeness, we also consider the case $\beta=0$.  In this regime the functional response becomes constant and the system no longer represents a biologically meaningful predator–prey model; consequently,  the corresponding results should  be interpreted purely in a mathematical sense.

 Although the parameters of the Gause system have biological meaning only when they belong to appropriate subsets of the real field 
$\RR$, in our analysis we allow them to vary in the complex domain 
$\CC$, with $k\neq 0$ and $ m\in\mathbb{Z}_{>0}$. This extension does not affect the conclusions for the real parameter regime; rather, it provides a unified and algebraically complete setting in which integrability can be fully characterized. Our analysis relies on Darboux theory and its extensions—an algebraic framework valid over any field of characteristic zero. Therefore, the integrability conditions obtained in the complex domain specialize consistently to the biologically relevant real case. Consequently, the  integrability or nonintegrability established over 
$\CC$ automatically determines the corresponding behavior in the real, ecologically meaningful parameter regime.

The vector field associated with system \eqref{Ga1}, is given by
$$
 L_{v}=-\frac{\frac{r}{k}\Pi_{m+2}(x)+\alpha  x^m y  }{\beta+x^m}\frac{\partial}{\partial x}+\frac{(- \gamma\beta+ (\alpha c-\gamma)x^m )y}{  \beta+x^m }\frac{\partial}{\partial y},
$$
where $\Pi_{m+2}(x)=x\left(x-k\right)(x^m+\beta)$ is a polynomial of degree $m+2$.

We distinguish two parameter regimes. If $\beta\neq 0$, both components of $L_v$ share the common factor $(\beta+x^m)^{-1}$. Multiplying by $\beta+x^m$ yields the polynomial vector field $L_{v_1}$ defined  below. Given that  multiplication of a vector field by a nonvanishing scalar function does not alter its phase curves but only reparametrizes time along trajectories, the study of first integrals of $L_v$ is equivalent to that of $L_{v_1}$.  If $\beta=0$, the functional response reduces to the constant 
$p(x)=\alpha$, and system \eqref{Ga1} simplifies directly to a polynomial 
system; the corresponding vector field is denoted by  $L_{v_2}$ below. Hence, the study of first integrals can  be reduced to the following  polynomial 
vector fields
\begin{equation}
\begin{aligned}\label{vf}
L_{v_1}&= - \left(\frac{r}{k}\Pi_{m+2}(x) +\alpha  x^m y  \right)  
\frac{\partial}{\partial x}
+\left(- \gamma\beta+ (\alpha c-\gamma)x^m \right)y 
\frac{\partial}{\partial y}, 
\quad \beta \neq 0, \\[4pt]
L_{v_2}&=  -\left(\frac{r}{k}x(x-k)+\alpha y\right) 
\frac{\partial}{\partial x}
+y(-\gamma+\alpha c) 
\frac{\partial}{\partial y}, 
\quad \beta = 0.
\end{aligned}
\end{equation}

The predator–prey system \eqref{Ga1} generalizes the classical Gause predator–prey model introduced in \cite{G34,G36}. Named after the Russian ecologist Georgii Gause, this framework incorporates nonlinear functional responses and density-dependent prey growth, yielding  more complex and biologically plausible dynamics. It represents a significant  advancement over the planar Lotka–Volterra system \cite{Lo20,Vo31},
 \begin{eqnarray*}
   \begin{cases}
\dot{x}=A x -B xy, \\
\dot{y}=C xy - D y,
\end{cases}  
\end{eqnarray*}
by including mechanistic details of predation and resource limitation.

 The integrability of planar Lotka-Volterra systems,   
 \begin{eqnarray*}
   \begin{cases}
\dot{x}=x(Ax  - B y+c_1), \\
\dot{y}=y(C x - Dy +c_2),
\end{cases}  
\end{eqnarray*}
 has been studied in \cite{CaGiLl07,CaGiLl03} for both real and complex  parameter values using the Darboux method. For higher-dimensional cases, see \cite{O97,O99,O01} and the references therein. Relatively few planar polynomial systems are known to admit Liouvillian or analytic first integrals (see \cite{FVW18,LlV012} and references therein). Along these lines, recent works have identified new cases of  Liouvillian integrability  among nonlinear oscillators, including general Rayleigh–Duffing-type systems and related biochemical models (see \cite{Gin22,GinV19,GinV25}).

The study of first integrals depending upon parameters is a classical problem in the qualitative theory of planar differential equations, which has been addressed using  specific methods (see, e.g., \cite{CaGiLl03}).

The classical Darboux theory on the  integrability of polynomial vector fields, established  by Jean–Gaston Darboux \cite{Darb,Darb78},  concerns  with the existence of rational first integrals and remains a powerful tool in this context. This method relies on finding a sufficient number of invariant algebraic curves, now known as  Darboux polynomials. Poincar\'e \cite{Poin91,Poin97}, was the first to recognize the difficulty of determining  in general whether a polynomial system admits such invariant algebraic solutions. One of his celebrated open problems asks whether, for a given  polynomial system of degree $m \ge 2$, there exists an upper bound $N(m)$ for the degree of its algebraic solutions. In general, such a bound cannot depend solely  on $m$ and may vary with the particular system under consideration (see \cite[Ch.4]{Zh17} and also \cite{Gin07}, in which  examples of planar systems illustrating this dependence are provided).

 The problem of finding Darboux polynomials has been studied extensively. Under certain assumptions, efficient computational algorithms exist (see \cite{BoCh16,Che11}), though they often require a degree bound or an a priori assumption on the degree. A vast body of work also addresses the computation  of rational first integrals and bounded families of Darboux polynomials (see \cite[Sect.~2.3]{BoCh16}, also  \cite{Che11,LlVZ16,Sch93} for
some historical remarks). Recently, the authors of  \cite{ChCo20} developed a semialgorithm for computing rational, Darbouxian, Liouvillian, or Riccati first integrals without explicitly calculating Darboux polynomials. Nonetheless, proving the existence of Darboux polynomials, or bounding their degree, often remains a formidable  challenge. As shown below, the polynomial system \eqref{vf} admits Darboux polynomials of unbounded degree, rendering  algorithmic  approaches inapplicable. While traditional algorithms often require an a priori degree bound, the Puiseux series method \cite{D22,DGV22} provides a systematic framework to determine integrability without this constraint. This approach has led to significant classification results for several regimes of polynomial Li\'enard systems, establishing necessary and sufficient conditions for Liouvillian integrability in cases in which  the degrees of the defining polynomials satisfy specific relations \cite{D23}.

Our analysis of Liouvillian integrability is based  on the results of M.F. Singer \cite{Sing92} and C. Christopher \cite{Chr99}, who showed that for planar polynomial systems the existence of a Liouvillian first integral is equivalent to the existence of an integrating factor of extended Darboux type—that is, one constructed from Darboux polynomials, exponential factors, and rational combinations of these objects. Consequently, we deduce results on the Liouvillian integrability of an  Abel differential equation obtained from  system \eqref{Ga1}. The study of Abel equations has a long history, dating back to Abel, Liouville, and Appell in the nineteenth century. Despite their central role in the theory of ODEs, no general method exists for obtaining closed-form solutions beyond classical integrable subclasses (see Kamke \cite{Ka59}, Polyanin–Zaitsev \cite{Po03}). Even recent approaches—using symmetries, Darboux theory, or equivalence classifications (see \cite{AJLS24,Bo00,CGR09,Che00,Che03,Gi10,Op22})—have succeeded only  in  identifying  isolated particular solvable cases. This motivates rigorous proofs of nonintegrability, which clarify the limitations of analytic methods and establish precise boundaries between solvable and nonsolvable cases.

We also study the existence of local analytic first integrals in the  neighborhoods of equilibrium points. In this context,  we rely on the  classical Poincar\'e’s resonance criterion, which is discussed in the next section. This topic  is closely connected to the center problem: determining the conditions under which a singularity of a real  planar vector field is a center—that is, a point in which  all nearby trajectories are closed orbits (see \cite{Ch03,Gin07,Gin13}). It is a classical result—originating with Poincar\'e \cite{Poi81,Poi51} and rigorously established by Lyapunov \cite{Lya07}—that a singular point of a real planar analytic system whose linear part at the point  has purely imaginary eigenvalues (the nondegenerate case) is a center if and only if it admits a non-constant local analytic first integral. While Poincar\'e’s analysis of the return map yielded a formal first integral, the crucial proof of series convergence in the analytic category was provided by Lyapunov \cite{Lya07}.

 This fundamental result illustrates the importance of analytic first integrals in the study of local and global dynamics. However, constructing such integrals, or even proving their existence, remains a challenging problem. In this manuscript we employ the Poincar\'e approach as the main framework for our analysis of local integrability; while a  complete solution of the center problem is certainly an interesting direction, it is not addressed in detail  here. For broader perspectives on the center problem, including alternative methods and classifications, we refer the reader to the monographs  \cite{ChLi24,Po21}.

 Finally, the manuscript is organized as follows. Section \ref{S2} reviews the notation and basic definitions of  Darboux integrability. Section \ref{S3} presents our results concerning  Liouvillian integrability, and  Section \ref{S4} investigates  analytic integrability.

\section{Basic concepts, notations,  and statement of the results}\label{S2}

Let $\CC[x,y]$ denote the ring of polynomials with complex coefficients in the variables $x$ and $y$. For a subset $I$ of the set of roots of $\dsty \Pi_{m+2}$, we denote by $|I|$ the cardinality of $I$.
%

Given a system of differential equations
\begin{eqnarray} \label{S1}
\begin{cases}
\dot{x}= p(x,y), \\
\dot{y} = q(x,y),
\end{cases}
\end{eqnarray}
in which  $p$ and $q$ are functions of class $C^k(U)$, $k\geq 1$, and $U\subset \CC^2$ is open, the vector field $L$ associated with system \eqref{S1} is defined by
\begin{eqnarray}\label{L}
L=p\frac{\partial }{\partial x}+q\frac{\partial }{\partial y},
\end{eqnarray}
if $p$ and $q$ are polynomials, the degree of the polynomial system is defined as $M=\max\{\deg[p],\deg[q]\}$.

Let $U$ be an open subset of $\CC$ and let $I : U\rightarrow \CC$ be an analytic function which is not identically zero on $U$. The function $I$ is an  integrating factor   for system \eqref{S1} if 
\begin{eqnarray}\label{LH}
L[I] = -I\mathrm{div}(p,q),
\end{eqnarray}
where $\mathrm{div}$ denotes the divergence operator.

Recall (cf. \cite[Ch.~8]{DFL}, \cite[Ch.~1]{Zh17}) that a polynomial system \eqref{S1} is said to be integrable on an open subset $U \subset \CC^2$ if there exists a nonconstant analytic function $H : U \to \CC$, called a first integral of the system, such that $H(x(t),y(t)) = \mathrm{const.}$ for every trajectory $(x(t),y(t)) \subset U$. Equivalently, $H$ is a  first integral of \eqref{S1} if and only if
\begin{eqnarray}\label{fI}
L[H] \equiv 0.
\end{eqnarray}

An analytic first integral is a first integral which is an analytic function. As a particular case, a first integral which is a polynomial is called a polynomial first integral. Roughly speaking, a Liouvillian first integral is a first integral that can be expressed as a Liouvillian function, i.e., obtained through quadratures of elementary functions (see \cite[Sect.~3.3]{Zh17} for the formal definition).
A function $H$ is a Darboux first integral of system \eqref{S1} if it is a first integral and is of Darboux type (also called a Darbouxian function), namely,
$$H(x,y)=e^{\frac{f(x,y)}{g(x,y)}}\prod_{i=1}^sf_i^{\lambda_i}(x,y), $$
where $f,g\in \CC[x,y],  \lambda_i\in \CC$, and $f_i\in \CC[x,y], \deg[f_i]\geq 1, i=1,\ldots ,s$; $f_i$ irreducible over $\CC$.

For a detailed exposition of the Darboux and Liouvillian theory of  integrability we refer the reader to \cite{DFL}. See also \cite{PZ13,Sing92,Zh17} for treatments of Liouvillian integrability in the framework of differential algebra.

 \begin{definition}[\cite{Darb}]\label{I}
An affine algebraic curve $f(x,y)=0$, with $f\in \CC[x,y]$ and $\deg[f]\geq 1$, is said to be invariant for a polynomial differential system \eqref{S1} (equivalently, for the differential equation $qdx - pdy = 0$),  if
\begin{eqnarray}\label{DarbP}
L[f] = \mathfrak{c}f,
\end{eqnarray}
where $\mathfrak{c} \in \CC[x,y]$ and $L$ is the associated vector field \eqref{L}. In this case, $\mathfrak{c}$ is called the cofactor of $f$. Such a polynomial $f$ is usually referred to as a Darboux polynomial of system \eqref{S1}. In particular, a polynomial first integral is a Darboux polynomial with eigenvalue (or cofactor) equal to $0$. For completeness, one may also regard nonzero constant polynomials as trivial Darboux polynomials, with identically zero cofactor, although they are usually excluded by some authors  from consideration.
\end{definition}
\begin{definition}[\cite{Ch94}; see also \cite{ChCoLl99,ChCoLl00}]\label{E}
An expression of the form $e^{\tfrac{f}{g}}$, with $f,g \in \CC[x,y]$ relatively prime, is called an exponential factor for system \eqref{S1} (with associated vector field \eqref{L}) if
\begin{eqnarray}\label{ExpFac}
L\left[e^{\tfrac{f}{g}}\right] = K e^{\tfrac{f}{g}},
\end{eqnarray}
where $K \in \CC[x,y]$ with $\deg[K] \leq M-1$. The polynomial $K$ is called the cofactor of the exponential factor $e^{\tfrac{f}{g}}$.
\end{definition}
The following proposition is fundamental in the proof of our main result (Theorem \ref{Liov}); see \cite[Prop.~8.6, p.~218]{DFL} or \cite[Prop.3.4 p.92]{Zh17}.
\begin{proposition}\label{Link}
If $F = e^{\tfrac{f}{g}}$ is an exponential factor for the polynomial system \eqref{S1}, then $g$ is a Darboux polynomial, and $f$ satisfies
\begin{eqnarray}\label{def[f]}
L[f] = f K_g + g K_F,
\end{eqnarray}
where $K_g$ and $K_F$ are the cofactors of $g$ and $F$, respectively.
\end{proposition}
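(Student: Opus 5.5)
We prove Proposition \ref{Link} directly from the defining identity \eqref{ExpFac}. The plan is to differentiate the exponential, cancel the nonvanishing factor, clear denominators, and then use coprimality of $f$ and $g$ to force $g$ to divide $L[g]$.

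\emph{Step 1: reduce to a polynomial identity.} Since $L$ is a derivation, $L[e^{f/g}] = e^{f/g}\, L[f/g]$. Because $e^{f/g}$ never vanishes, \eqref{ExpFac} is equivalent to $L[f/g] = K_F$, that is,
\begin{equation*}
\frac{g\,L[f] - f\,L[g]}{g^{2}} = K_F .
\end{equation*}
Multiplying through by $g^2$ gives the polynomial identity
\begin{equation*}
g\,L[f] - f\,L[g] = K_F\, g^{2}.
\end{equation*}

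\emph{Step 2: show $g \mid L[g]$.} From the identity, $g$ divides $f\,L[g]$. Since $f$ and $g$ are relatively prime and $\CC[x,y]$ is a UFD, it follows that $g \mid L[g]$. Write $L[g] = K_g\, g$ with $K_g \in \CC[x,y]$. If $\deg[g] \geq 1$, this is exactly the statement that $g$ is a Darboux polynomial in the sense of Definition \ref{I}. If $g$ is a nonzero constant, then $L[g]=0$ and $K_g=0$, so $g$ is a trivial Darboux polynomial under the convention stated after Definition \ref{I}.

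\emph{Step 3: derive \eqref{def[f]}.} Substituting $L[g] = K_g\, g$ into the identity of Step 1 gives $g\,L[f] - f K_g\, g = K_F\, g^2$. Dividing by the nonzero polynomial $g$ yields $L[f] = f K_g + g K_F$, which is \eqref{def[f]}.

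The only non-formal step is the divisibility argument in Step 2. It needs unique factorization together with the coprimality hypothesis in Definition \ref{E}, applied irreducible factor by irreducible factor. Everything else is routine algebra.
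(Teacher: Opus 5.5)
Your proof is correct, and it is the standard argument for this proposition. The paper does not prove it itself but defers to \cite[Prop.~8.6]{DFL} and \cite[Prop.~3.4]{Zh17}. There, as in your proof, one cancels $e^{f/g}$ and clears $g^2$ to get $gL[f]-fL[g]=K_Fg^2$, uses coprimality to conclude $g\mid L[g]$, and divides by $g$; your handling of a constant $g$ via the paper's convention on trivial Darboux polynomials is also consistent.
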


 Define the sets $A=\{(\alpha, r, c,k,\gamma,\beta)\in \CC^6: r\neq 0, \alpha\neq 0 \} $ and $B=\{(\alpha, r, c,k,\gamma,\beta)\in \CC^6:  \alpha  c- \gamma=0, \gamma\beta=0\}$. Our main theorem can be stated as follows.

\begin{theorem}\label{Liov}
Assume that $(\alpha,r,c,k,\gamma,\beta)\in A\setminus B$ and $m\geq 1$.
Then 
\begin{itemize}
\item[a)] for $L_{v_1}$, the irreducible Darboux polynomials are $x$ and $y$ with respective cofactors $ -\frac{ r}{k}(x-k)(x^m+\beta)
-\alpha x^{m-1}y$ and  $ 
-\gamma\beta+(\alpha c-\gamma)x^m$ ;
\item[b)] for $L_{v_2}$, $y$ is the only irreducible Darboux polynomial with cofactor $\alpha c-\gamma$.
\end{itemize}
\end{theorem}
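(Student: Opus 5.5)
The plan is to expand a putative Darboux polynomial in powers of $y$ and compare coefficients. Both vector fields in \eqref{vf} have components that are at most linear in $y$, so this comparison is very rigid. Let $f=\sum_{j=0}^{n} f_j(x)y^j$ with $f_n\not\equiv 0$ be an irreducible Darboux polynomial of $L_{v_1}$ with cofactor $K$. Applying $L_{v_1}$ raises the $y$-degree by at most one, through the single term $-\alpha x^m y\,\partial_x$. Hence $K=K_0(x)+K_1(x)y$.

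\textbf{Step 1 (top and bottom coefficients).} At order $y^{n+1}$ we get $-\alpha x^m f_n'=K_1 f_n$. Since $\alpha\neq0$, the rational function $x^mf_n'/f_n$ must be a polynomial. This forces $f_n=a x^s$ and $K_1=-\alpha s x^{m-1}$. At order $y^0$ we get $-\frac{r}{k}\Pi_{m+2}f_0'=K_0f_0$. If $f_0\not\equiv 0$, then $f_0$ is a product of powers of the linear factors of $\Pi_{m+2}$, and $K_0=-\frac rk \Pi_{m+2}f_0'/f_0$ is determined by these multiplicities; this is where the notation $I$ for a subset of roots of $\Pi_{m+2}$ enters. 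If instead $f_0\equiv0$, then $y\mid f$, so $f=y$ by irreducibility.

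\textbf{Step 2 (restriction to the invariant line $x=0$).} We may assume $x\nmid f$ and $y\nmid f$. On $x=0$ the field restricts to $-\gamma\beta\,y\,\partial_y$, so $g(y):=f(0,y)$ satisfies $-\gamma\beta\, y g'=K(0,y)g$. When $\gamma\beta\ne0$, $g$ must be a monomial; since $f_0(0)\neq0$ (as $x\nmid f$), this gives $g$ constant. That in turn forces $s>0$ unless $n=0$, and it gives $K(0,y)=0$, so $K_1(0)=0$ and $K_0(0)=0$.

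\textbf{Step 3 (intermediate coefficients).} The relations at orders $y^{j}$ for $1\le j\le n$ read
\[
-\tfrac rk\Pi_{m+2}f_j'-\alpha x^m f_{j-1}'+j\bigl(-\gamma\beta+(\alpha c-\gamma)x^m\bigr)f_j=K_0f_j+K_1f_{j-1}.
\]
I would propagate these downward from $f_n=ax^s$. In each relation I would compare the highest $x$-degree terms, which is equivalent to using the weighted-homogeneous leading part $-x^m(\frac rk x^2+\alpha y)\partial_x$ with weights $\deg x=1$, $\deg y=2$. I would also evaluate the relations at the roots of $\Pi_{m+2}$, using the description of $K_0$ from Step 1. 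The goal is a linear identity among $s$, $n$, the multiplicities in $f_0$, and the quantities $\gamma\beta$ and $\alpha c-\gamma$; this identity should be inconsistent as soon as $(\alpha,r,c,k,\gamma,\beta)\notin B$. Where $\gamma\beta=0$ but $\alpha c-\gamma\neq0$, the argument of Step 2 is unavailable. In that case I would instead use the top $x$-degree of the relation at order $y^n$, where $n(\alpha c-\gamma)$ appears against $-\alpha s$ and $-\frac rk(\deg f_0)$-type contributions. The conclusion would be that $f$ is a constant multiple of $x$ or $y$. The cofactors of $x$ and $y$ are then read off directly from $L_{v_1}[x]$ and $L_{v_1}[y]$.

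I expect Step 3 to be the main obstacle. The degrees of the $f_j$ are not a priori bounded, since the system admits Darboux polynomials of unbounded degree in integrable parameter cases. The argument must therefore extract an obstruction from the leading $x$-coefficients alone. I would first try an induction on $j$, showing that $\deg f_j$ equals $s+(m+2)(n-j)$-type values with explicitly determined leading coefficients. At $j=0$ this should clash with the form of $f_0$ found in Step 1.

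\textbf{Case b).} Here $\beta=0$ and, since we are outside $B$, $\lambda:=\alpha c-\gamma\neq0$. The same expansion applies. The top relation $-\alpha f_n'=K_1f_n$ gives $K_1=0$ and $f_n$ constant, so $K=K_0(x)$ with $\deg K_0\le 1$. Comparing $x$-degrees at order $y^{n}$ gives $K_0=n\lambda$. Pushing down the recursion $-\frac rk x(x-k)f_j'-\alpha (j+1)f_{j+1}+j\lambda f_j=n\lambda f_j$ should force $n=1$ and $f_0\equiv0$, so $f=y$ with cofactor $\alpha c-\gamma$.
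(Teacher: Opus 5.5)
\textbf{There is a genuine gap, and it cannot be closed: the statement is false on $A\setminus B$.}

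The decisive part of your argument is Step~3, and it is only a programme (``this identity should be inconsistent as soon as $(\alpha,r,c,k,\gamma,\beta)\notin B$''). No such inconsistency exists. Take $m=1$, $\gamma=0$, $\beta\notin\{0,-k\}$ and $c=\frac{2r(k+\beta)}{\alpha k}$. Then $\alpha c-\gamma\neq0$, so the parameters lie in $A\setminus B$. Let $f=y+\frac{r}{\alpha k}(x+\beta)^2$.
\begin{itemize}
\item The coefficient of $y$ in $L_{v_1}[f]$ is $\alpha c\,x-\frac{2r}{k}x(x+\beta)=-\frac{2r}{k}x(x-k)$.
\item The coefficient of $y^0$ is $-\frac{r}{k}x(x-k)(x+\beta)\cdot\frac{2r}{\alpha k}(x+\beta)$.
\end{itemize}
Hence $L_{v_1}[f]=-\frac{2r}{k}x(x-k)\,f$, with $f$ irreducible and different from $x$ and $y$. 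A concrete instance is $r=k=\alpha=\beta=1$, $c=4$, $\gamma=0$, $f=y+(x+1)^2$. There $x^{-1}y^{-1/2}\bigl(y+(x+1)^2\bigr)^{-1}$ is even a Darbouxian integrating factor.

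In your notation this example has $n=1$, $s=0$, $f_0=\frac{r}{\alpha k}(x+\beta)^2$. The top-$x$-degree comparison at order $y^n$ that you plan to use when $\gamma\beta=0$ only fixes $\deg f_0=2$ and its leading coefficient $\frac{r}{\alpha k}$. Matching the remaining coefficients merely imposes the satisfiable relation $\alpha ck=2r(k+\beta)$. The reason is visible in your own weighted leading part $-x^m(\frac rk x^2+\alpha y)\partial_x$. It has the extra invariant curve $\alpha y+\frac rk x^2$, so leading-coefficient arguments cannot by themselves exclude Darboux polynomials whose leading form is a power of it.

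\textbf{Part b) fails in the same way.} For $\beta=0$ and $\alpha c-\gamma=2r$ one checks
$L_{v_2}\bigl[y+\frac{r}{\alpha k}x^2\bigr]=-\frac{2r}{k}(x-k)\bigl(y+\frac{r}{\alpha k}x^2\bigr)$.
Your recursion there is also misstated. The $y^j$-coefficient contains $-\alpha f_{j-1}'$, not $-\alpha(j+1)f_{j+1}$. So the order-$y^n$ relation gives $K_0=n\lambda-\alpha f_{n-1}'/f_n$, not $K_0=n\lambda$.

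\textbf{Step 2 is also wrong.} The hypothesis $x\nmid f$ only gives $f(0,y)\not\equiv0$, not $f_0(0)\neq0$. Thus $f(0,y)=ay^p$ with possibly $p\ge1$, and then $K(0,y)=-\gamma\beta p$. This case really occurs with $\gamma\beta\neq0$. For $m=1$, $\gamma=-2r$, $\alpha ck=-2r\beta$, $\beta\neq0$, the polynomial $y+\frac{r}{\alpha k}x^2$ is a Darboux polynomial of $L_{v_1}$ with cofactor $-\frac{2r}{k}(x-k)(x+\beta)$. Here $s=0$ and $n=1$, contradicting your claim that $s>0$ unless $n=0$.

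\textbf{The paper's own proof breaks at the corresponding place.} In Lemma~\ref{PolS}, the ``forced zeros'' factorization $w=x^{m-1}\prod(x-x_i)^{n_i-1}u$ overlooks a condition. At a root $x_i\neq0$ of $X$ with $n_i\ge2$, evaluating the equation gives $\bigl(\frac rk n_i\Pi_{m+2}'(x_i)+R_m(x_i)\bigr)w(x_i)=0$. So $w(x_i)=0$ is forced only when that coefficient is nonzero. Your first example is exactly this resonant situation: $X=(x+\beta)^2$, $n=0$, $R_1=\alpha c\,x$, and the constant $w=\frac{\alpha k}{r}$ solves the equation, contradicting the lemma.

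So the missing ingredient is not a sharper induction. The theorem itself needs additional restrictions on the parameters that exclude such resonances before any proof along your lines, or the paper's, can succeed.
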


Because every  Darboux polynomial  is a product of powers of the  irreducible ones, it follows from Theorem \ref{Liov}  that  every   Darboux polynomial of $L_{v_1}$ or $L_{v_2}$ can be expressed  respectively as 
\begin{equation}\label{f(x,y)}
\begin{aligned}
g_1(x,y)=&  x^{n_1}y^{n_2},\\
 \mathcal{K}_{g_1}(x,y)=& -\frac{n_1r}{k}(x-k)(x^m+\beta)-n_2(\gamma\beta-(\alpha c-\gamma)x^m)-n_1\alpha x^{m-1}y, \quad n_1,n_2\in \ZZ_{\geq 0},  \\
g_2(x,y)= & y^{n_2},\\
\mathcal{K}_{g_2}(x,y)= & n_2(\alpha c-\gamma), \quad   n_2\in   \ZZ_{\geq 0}.
 \end{aligned}
\end{equation}

 \begin{theorem}\label{Liov2}
Assume that $(\alpha,r,c,k,\gamma,\beta)\in A\setminus B$ and $m\geq 1$. Then   $L_{v_i},i=1,2$,    does not have Darbouxian integrating  factors. Thus, system \eqref{Ga1}  is not Liouvillian  integrable.  Moreover, in the following cases when  $(\alpha,r,c,k,\gamma,\beta)\notin A$ or $(\alpha,r,c,k,\gamma,\beta)\in B$, the associated vector field for  system \eqref{Ga1} reduces to the   following integrable cases with their respective first integrals: 
\begin{itemize}
\item[a)] for $\alpha=0$,  
\begin{eqnarray*}
L_{v}&=&  - \frac{r}{k}\Pi_{m+2}(x)   \frac{\partial}{\partial x}-\left(\gamma\beta+\gamma x^m \right)y \frac{\partial}{\partial y},\\
H(x, y) &=& y \exp \left( -\frac{\gamma k}{r} \int \frac{ dx}{x(x-k)}  \right).
\end{eqnarray*}

\item[b)] for    $r= 0$, 
\begin{eqnarray*}
L_{v}&=&  -  \alpha  x^m   \frac{\partial}{\partial x}+\left(- \gamma\beta+ (\alpha c-\gamma)x^m \right) \frac{\partial}{\partial y}, \\
H(x, y) &=&  y -\int  \left(\frac{\gamma \beta}{ \alpha x^{m} } - \frac{\gamma - \alpha c}{\alpha}\right) dx,  
\end{eqnarray*}

\item[c)] for  $ \alpha  c- \gamma=0$ and $\gamma\beta=0$, 
\begin{eqnarray*}
L_{v}&=&  - \frac{r}{k}\Pi_{m+2}(x)   \frac{\partial}{\partial x},\\
H(x,y)&=&p(y),
\end{eqnarray*}
where $p$ is any polynomial.
\end{itemize}

\end{theorem}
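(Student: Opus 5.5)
The plan is to combine Theorem \ref{Liov} with the Singer--Christopher criterion. By \cite{Sing92,Chr99}, system \eqref{Ga1} admits a Liouvillian first integral if and only if $L_{v_i}$ has an integrating factor of the form
\[
I=\exp\!\Big(\frac{f}{g}\Big)\prod_j f_j^{\lambda_j},\qquad f_j \text{ Darboux polynomials},\ \exp(f/g)\ \text{a product of exponential factors}.
\]
By Theorem \ref{Liov}, the only available Darboux factors are $x^{\lambda_1}y^{\lambda_2}$ for $L_{v_1}$ and $y^{\lambda_2}$ for $L_{v_2}$, with $\lambda_i\in\CC$. So the work splits into two steps: first classify the exponential factors, then show that the cofactor equation
\[
\lambda_1\mathcal K_x+\lambda_2\mathcal K_y+K_F=-\mathrm{div}(L_{v_i})
\]
has no solution on $A\setminus B$. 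Here $\mathcal K_x$ and $\mathcal K_y$ are the cofactors of $x$ and $y$ from Theorem \ref{Liov}, and $K_F$ is the cofactor of $\exp(f/g)$.

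\textbf{Step 1: classify the exponential factors.} By Proposition \ref{Link}, any exponential factor $e^{f/g}$ has $g$ a Darboux polynomial, so $g=x^{n_1}y^{n_2}$ (respectively $g=y^{n_2}$). Moreover $f$ satisfies $L[f]=f\mathcal K_{g}+gK_F$ with $\deg K_F\le M-1$.

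\emph{Case $g=1$.} Here $L[f]=K_F$ with $f$ a polynomial. Write $f$ by its top homogeneous (or weighted-homogeneous) component. The highest-degree part of $L[f]$ must cancel beyond degree $M-1$, and this forces $f$ to depend on very few monomials. I expect $f$ to be at most linear in $y$ plus a polynomial in $x$. Then comparing coefficients using $r\neq0$, $\alpha\ne0$ pins $f$ down to a small explicit family.

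\emph{Case $g\neq 1$.} Reduce to the previous case by restriction. Evaluating $L[f]=f\mathcal K_g+gK_F$ on $x=0$ (respectively $y=0$), the $g$-term vanishes. Then $f|_{x=0}$ must be a Darboux polynomial of the restricted one-dimensional field with cofactor $\mathcal K_g|_{x=0}$. Coprimality of $f$ and $g$ means $f|_{x=0}\not\equiv0$, and this restricted equation gives a degree or cofactor contradiction. The conclusion should be that only $n_i\le1$ can survive, and even those force parameter relations that lie in $B$ or outside $A$.

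I expect this classification of exponential factors to be the main obstacle, since the degree bookkeeping with the mixed term $\alpha x^m y$ is delicate.

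\textbf{Step 2: solve the cofactor equation.} With the exponential factors known, substitute into the cofactor identity, using
\[
\mathrm{div}\,L_{v_1}=-\frac rk\Pi_{m+2}'(x)-\alpha m x^{m-1}y-\gamma\beta+(\alpha c-\gamma)x^m .
\]
Compare coefficients in turn:
\begin{itemize}
\item the coefficient of $x^{m-1}y$ gives $\lambda_1$ in terms of the $y$-coefficient of $K_F$, essentially $\lambda_1=m$ if $K_F$ has no such term;
\item the pure $x$-powers of degree $m+1$ down to $0$ then give an overdetermined linear system in $\lambda_1$, $\lambda_2$ and the coefficients of $K_F$.
\end{itemize}
I expect the top coefficient to give $\lambda_1=-(m+2)$ or similar, contradicting $\lambda_1=m$ unless $r=0$ or $\alpha=0$. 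Any remaining freedom should be removable only if $\alpha c-\gamma=0$ and $\gamma\beta=0$.

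For $L_{v_2}$ the same comparison applies. Here $\mathrm{div}=-\frac rk(2x-k)+(\alpha c-\gamma)$ and only $y^{\lambda_2}$ is available, so the $x$-term must come from $K_F$. Step 1 shows this is impossible for $r\alpha\neq0$ outside $B$. Hence there is no Darbouxian integrating factor, and therefore no Liouvillian first integral.

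\textbf{Step 3: the integrable cases.} For the listed integrable cases a)–c), verify directly that $L_v[H]=0$ by differentiating under the integral sign.
\begin{itemize}
\item In a), the field splits into $\dot x$ depending only on $x$ and $\dot y=y\,\phi(x)$. Then $\log y-\int \phi(x)/\dot x\,dx$ is constant, and this simplifies to the given $H$ because $x^m+\beta$ cancels.
\item In b), $dy/dx$ is a function of $x$ alone, which gives the stated $H$.
\item In c), $\dot y\equiv0$, so any $p(y)$ is a first integral.
\end{itemize}
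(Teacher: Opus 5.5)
There is a genuine gap here, and it cannot be closed, because the statement fails on part of $A\setminus B$. Your architecture is close to the paper's: the Singer--Christopher criterion, Theorem~\ref{Liov} to reduce the polynomial part to $x^{\lambda_1}y^{\lambda_2}$, then exponential factors and a cofactor identity. The paper differs only in expanding $g/h$ in powers of $y$ and using the degree bound of Lemma~\ref{FPS,3}. However, both of your substantive steps are only expectations, and the decisive one in Step~2 is false.

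Your own Step~1 yields, for every parameter value, the exponential factor $e^{ay}$. Its cofactor $K=a\bigl(-\gamma\beta+(\alpha c-\gamma)x^m\bigr)y$ has degree $m+1=M-1$.

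For $m\ge 2$ your comparison with this $K$ does close. The $x^{m-1}y$ coefficient gives $\lambda_1=-m$ (not $m$), and the $x^{m+1}$ coefficient gives $\lambda_1=-(m+2)$.

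For $m=1$, however, the monomial $x^{m-1}y=y$ collides with the term $-a\gamma\beta\,y$ of $K$:
\begin{itemize}
\item the $y$-coefficient only gives $\lambda_1=-1-a\gamma\beta/\alpha$;
\item if $\alpha c=\gamma$, the $xy$-coefficient imposes nothing;
\item the top coefficient $\lambda_1=-3$ is then reached with $a=2\alpha/(\gamma\beta)$;
\item the $x$-coefficient forces $\beta=k$;
\item the constant term gives $\lambda_2=-1-2r/\gamma$.
\end{itemize}
Hence for $m=1$, $\alpha c=\gamma\neq 0$, $\beta=k$, the function
\[
I=x^{-3}\,y^{-1-2r/\gamma}\,e^{2\alpha y/(\gamma k)}
\]
is a Darbouxian integrating factor of $L_{v_1}$. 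This family lies in $A\setminus B$ because $\gamma\beta\neq0$. For instance, with $\alpha=c=\gamma=\beta=k=r=1$ one gets $L_{v_1}=(x-x^3-xy)\partial_x-y\partial_y$ and $I=x^{-3}y^{-3}e^{2y}$. The corresponding Liouvillian first integral is $H=\frac{e^{2y}}{2x^2y^2}+\int\frac{e^{2y}}{y^3}\,dy$; the $x$-equation is of Bernoulli type. So no completion of your plan can prove the statement as written. The paper's proof has the same hole: it declares exactly this configuration ($h=1$, $n_2=0$, $N=1$) impossible.

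Two other steps would also fail.

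\emph{Step~1 restriction argument.} It gives no contradiction in general. On $x=0$, with $m\ge 2$ and $\beta\neq 0$, it only yields $n_1r=(n_2-j)\gamma$ for some $j\ge 0$. Indeed, $e^{y/x}$ is an exponential factor of $L_{v_1}$ whenever $\gamma=-r$ and $m\ge2$, with cofactor $y\bigl(\alpha c\,x^{m-1}+\frac{r}{k}x^m+\frac{r\beta}{k}\bigr)+\alpha x^{m-2}y^2$. The relation $\gamma=-r$ is neither in $B$ nor outside $A$.

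\emph{Input from Theorem~\ref{Liov}.} It does not hold on all of $A\setminus B$. Take $m=1$, $\gamma=0$, $(\alpha,r,c,k,\beta)=(4,1,1,1,1)$. Then the irreducible polynomial $(x+1)^2+4y$ is a Darboux polynomial of $L_{v_1}$ with cofactor $2x(1-x)$. Moreover, $x^{-1}y^{-1/2}\bigl((x+1)^2+4y\bigr)^{-1}$ is an integrating factor. This is the case where the Bessel functions behind Theorem~\ref{SFunct} have order $\tfrac12$ and are elementary.

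Finally, the direct check you propose in Step~3 will show that in b) the term $\frac{\gamma-\alpha c}{\alpha}$ must enter $H$ with the opposite sign.
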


Recall that an Abel-type differential equation of the second kind is an ODE of the form 
\[
\bigl(b_0(x)+b_1(x)\,y \bigr)\,\frac{dy}{dx} \;=\; a_3(x)\,y^3 + a_2(x)\,y^2 + a_1(x)\,y + a_0(x), 
\]
 as a consequence of Theorem \ref{Liov2} we obtain  the Liouvillian integrability of an Abel differential equation obtained from the system.

\begin{coro}\label{coro}\label{Abel}
Under the same hypothesis of Theorem \ref{Liov2},  the second kind Abel differential equation 
\begin{eqnarray}\label{AbDE}
\left(\frac{r}{k}\Pi_{m+2}(x)+\alpha x^m y\right)\frac{dy}{dx}= (\gamma\beta-(\alpha c-\gamma)x^m)y,
\end{eqnarray}
does not possess   Liouvillian first  integrals. 
\end{coro}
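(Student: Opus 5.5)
The corollary should follow from Theorem \ref{Liov2} once we check that equation \eqref{AbDE} is the differential equation of the orbits of the polynomial vector field $L_{v_1}$ (for $\beta\neq 0$). Write $L_{v_1}=P\partial_x+Q\partial_y$ with $P=-\bigl(\frac{r}{k}\Pi_{m+2}(x)+\alpha x^m y\bigr)$ and $Q=(-\gamma\beta+(\alpha c-\gamma)x^m)y$. Off the zero set of $P$, the trajectories satisfy $dy/dx=Q/P$. Multiplying through by $-P$ gives exactly
\[
\left(\frac{r}{k}\Pi_{m+2}(x)+\alpha x^m y\right)\frac{dy}{dx}=(\gamma\beta-(\alpha c-\gamma)x^m)y,
\]
which is \eqref{AbDE}. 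The left-hand coefficient is $b_0+b_1y$ and the right-hand side is linear in $y$, so this is an Abel equation of the second kind with polynomial coefficients. A first integral of \eqref{AbDE} is a nonconstant function $H$ with $H_x+H_y\,dy/dx=0$ along solutions, i.e.\ $PH_x+QH_y=0$. This is precisely $L_{v_1}[H]=0$, so Liouvillian first integrals of \eqref{AbDE} and of $L_{v_1}$ coincide.

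Step one is therefore to argue by contradiction. Suppose \eqref{AbDE} had a Liouvillian first integral $H$. Then $H$ is a Liouvillian first integral of $L_{v_1}$. Since $L_{v_1}$ is obtained from $L_v$ by multiplying by the nonvanishing factor $\beta+x^m$, the function $H$ is also a Liouvillian first integral of system \eqref{Ga1}, at least on the open set where $\beta+x^m\neq0$, which suffices.

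Step two is to invoke Singer's theorem \cite{Sing92}, in Christopher's form \cite{Chr99}. It says that $L_{v_1}$ would then have an integrating factor of extended Darboux type, i.e.\ Darbouxian in the sense used in Theorem \ref{Liov2}. Theorem \ref{Liov2} rules this out for $(\alpha,r,c,k,\gamma,\beta)\in A\setminus B$. Equivalently, Theorem \ref{Liov2} already states that system \eqref{Ga1} is not Liouvillian integrable, and we apply that conclusion directly. This contradiction proves the corollary.

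The only delicate point is the case $\beta=0$. There \eqref{AbDE} has left-hand coefficient $\frac{r}{k}x^{m+1}(x-k)+\alpha x^m y$, which factors as $x^m\bigl(\frac{r}{k}x(x-k)+\alpha y\bigr)$, and right-hand side $-(\alpha c-\gamma)x^m y$. After cancelling $x^m$, the equation is the orbit equation of $L_{v_2}$. Cancelling a common factor does not change the first integrals on the open set $x\neq0$, so the same argument runs with part b) of Theorem \ref{Liov2}. I expect the main care is needed in this identification, and in making precise that Liouvillian first integrals of the scalar equation and of the planar vector field are the same objects over the differential field $\CC(x,y)$. Once that is in place, the rest is a direct transfer of Theorem \ref{Liov2}.
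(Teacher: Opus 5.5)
Your reduction is the same as the paper's. You identify \eqref{AbDE} as the orbit equation of $L_{v_1}$ (or of $L_{v_2}$ after cancelling $x^m$ when $\beta=0$), so a Liouvillian first integral of the Abel equation is one of system \eqref{Ga1}, and you then invoke Theorem~\ref{Liov2}. As a deduction from Theorem~\ref{Liov2} this is sound. It is also more careful than the paper's one-line proof, which handles $\beta=0$ only implicitly and cites Theorem~\ref{Liov} where Theorem~\ref{Liov2} is meant.

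The real gap is the ingredient you import: Theorem~\ref{Liov2} is false on part of $A\setminus B$. Your final step, and the paper's, therefore breaks down, and the corollary is false as stated. Take $m=1$, $\gamma=0$, $r=k=\alpha=\beta=1$, $c=4$. This point lies in $A\setminus B$ because $\alpha c-\gamma=4\neq 0$. Then $L_{v_1}=-x(x^2-1+y)\,\partial_x+4xy\,\partial_y$, and
\[
f=(x+1)^2+y \quad\text{satisfies}\quad L_{v_1}[f]=-2x(x-1)\,f .
\]
So $f$ is an irreducible Darboux polynomial different from $x$ and $y$, contradicting Theorem~\ref{Liov}(a). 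Moreover, $x^{-1}y^{-1/2}f^{-1}$ is a Darbouxian integrating factor, and
\[
H(x,y)=2\sqrt{y}-4\arctan\frac{\sqrt{y}}{x+1}
\]
satisfies $H_x+H_y\,y'=0$ along solutions of \eqref{AbDE}, which here reads $x(x^2-1+y)\,y'=-4xy$. Since $H$ is elementary, it is Liouvillian.

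This agrees with Theorem~\ref{SFunct}. For $\gamma=0$, $m=1$, the Bessel order there works out to $\nu=r(k+\beta)/(\alpha ck)$, which equals $\tfrac12$ here, and Bessel functions of half-integer order are elementary. More generally, for $m=1$, $\gamma=0$ and $\alpha ck=2r(k+\beta)\neq 0$, the curve $(x+\beta)^2+\frac{\alpha k}{r}y=0$ is invariant.

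The error originates in Lemma~\ref{PolS}. With $X=(x+1)^2$, $n=0$, $R_1(x)=4x$, the constant $w\equiv 1$ solves \eqref{i0+1}, whereas the ``forced zeros'' factorization \eqref{p} would require $(x+1)\mid w$. Hence neither your argument nor the paper's establishes the corollary on all of $A\setminus B$. A correct version must exclude such parameter values, and the work needed lies in the classification of Darboux polynomials, not in the reduction step you carried out.
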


The study of analytic first integrals in a neighborhood of an equilibrium point   is based on the following theorem, recall that a point $(x_0, y_0)$ is called a singular or equilibrium point of \eqref{S1} if $p(x_0, y_0) = q(x_0, y_0) = 0$.

\begin{theorem}[Poincar\'e \cite{Poi81}; \cite{Fu96}] \label{Thp}
Assume that the eigenvalues  $\lambda_1$ and $\lambda_2$ of the linear part at some singular point $(x^*,y^*)$ of a polynomial  vector field $L$  are nonzero and that they do not satisfy any resonance condition of the form
$$\lambda_1k_1+\lambda_2k_2=0, \quad \mbox{for} \quad k_1,k_2\in \ZZ^+ \quad \mbox{with}\quad  k_1+k_2>0.$$
Then the vector field $L$ has no analytic first integrals defined in a neighborhood of $(x^*,y^*)$.
\end{theorem}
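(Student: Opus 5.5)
We argue by contradiction, looking only at the lowest-order homogeneous part of a hypothetical analytic first integral. The whole argument reduces to a linear-algebra fact: the linear part of $L$ acts invertibly on homogeneous polynomials of each positive degree.

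\textbf{Normalization.} Translate the singular point $(x^*,y^*)$ to the origin. Then apply a (possibly complex) linear change of coordinates bringing the linear part $A$ of $L$ to Jordan normal form. Both operations are analytic and invertible, so they preserve the existence of a nonconstant analytic first integral on a neighbourhood of the point, and they leave the eigenvalues unchanged. We may therefore write
$$L=L_1+\widetilde L,$$
where $L_1$ is the linear field $(Az)\cdot\nabla$ with $z=(x,y)$, and the coefficients of $\widetilde L$ are polynomials containing only terms of degree $\geq 2$. Since we work in $\CC^2$ from the start, there are no reality constraints.

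\textbf{Extracting the lowest-order equation.} Suppose $H$ is a nonconstant analytic first integral near $0$. Subtracting $H(0)$, we may assume $H(0)=0$ and expand $H=\sum_{j\geq s}H_j$, where $H_j$ is homogeneous of degree $j$, $s\geq 1$, and $H_s\not\equiv 0$. The degree behaviour of the two pieces is:
\begin{itemize}
\item $L_1$ maps homogeneous polynomials of degree $j$ to homogeneous polynomials of degree $j$;
\item $\widetilde L$ raises the degree by at least one.
\end{itemize}
Hence the degree-$s$ component of the identity $L[H]\equiv 0$ is exactly $L_1[H_s]=0$. It therefore suffices to show that $L_1$ is injective on $\mathcal P_s$, the space of homogeneous polynomials of degree $s$, for every $s\geq1$. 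That gives the contradiction $H_s\equiv 0$.

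\textbf{Injectivity of $L_1$ on $\mathcal P_s$.} We treat the two Jordan forms separately.
\begin{itemize}
\item \emph{Diagonal case}, $L_1=\lambda_1x\partial_x+\lambda_2y\partial_y$. Then
$$L_1[x^ay^b]=(a\lambda_1+b\lambda_2)\,x^ay^b,$$
so $L_1$ is diagonal on the monomial basis of $\mathcal P_s$. Each eigenvalue $a\lambda_1+b\lambda_2$ has $a,b\geq0$ and $a+b=s>0$, so it is nonzero by the nonresonance hypothesis.
\item \emph{Nondiagonalizable case}, $\lambda_1=\lambda_2=\lambda$ and $L_1=\lambda(x\partial_x+y\partial_y)+y\partial_x$. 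On $\mathcal P_s$ this equals $\lambda s\,\mathrm{Id}+N$. Here $N=y\partial_x$ is nilpotent, since it lowers the power of $x$. Because $\lambda\neq0$, the scalar $\lambda s$ is nonzero, so $\lambda s\,\mathrm{Id}+N$ is invertible. This is consistent with the hypothesis: with nonnegative $k_1,k_2$, equal nonzero eigenvalues are nonresonant.
\end{itemize}

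\textbf{Main obstacle.} The only delicate point is bookkeeping: checking that no lower-degree contribution from $\widetilde L$ can enter the degree-$s$ equation. This holds because the origin is a singular point and the linear part has been split off, so every coefficient of $\widetilde L$ is $O(|z|^2)$. The remainder is the linear algebra above, where the nonresonance condition is used precisely to rule out zero eigenvalues of $L_1$ on each $\mathcal P_s$.
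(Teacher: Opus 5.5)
Your proof is correct. It is the standard argument for this result: the lowest-order homogeneous part $H_s$ must satisfy $L_1[H_s]=0$, and nonresonance together with $\lambda_1,\lambda_2\neq 0$ makes the linear part invertible on each space $\mathcal P_s$ of homogeneous polynomials of degree $s\geq 1$. The paper gives no proof of its own and simply quotes the theorem from Poincar\'e and Furta, whose reasoning is essentially yours; note that your argument uses only the Taylor coefficients of $H$, so it works verbatim for formal power series and rules out even formal first integrals.
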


\begin{theorem}\label{AI}
The following assertions hold:
\begin{itemize}
\item[a)]
The  isolated  equilibrium points    of $L_{v_1}$  with nonzero eigenvalues in its linear part  are given by  
  \begin{equation}
  \begin{aligned}\label{frakp}
  \mathfrak{p}_{1,j} &= \left(\sqrt[m]{\frac{\beta  \gamma }{\alpha  c-\gamma }}, -\frac{r(\alpha c-\gamma)\Pi_{m+2}\left(\sqrt[m]{\frac{\beta  \gamma }{\alpha  c-\gamma }}\right)}{k\alpha \gamma \beta }   \right), \quad \alpha\gamma r c\left(\sqrt[m]{\frac{\beta  \gamma }{\alpha  c-\gamma }}-k\right)\neq 0, \\
  \mathfrak{p}_2 &=(k,0),  \quad  r\Pi^{\prime}_{m+2}(k)\left(\gamma \beta - (\alpha c - \gamma)k^m\right)  \neq 0,  \\
  \mathfrak{p}_3 &=(0,0), \quad   r\gamma \neq 0,   \\
   \mathfrak{p}_{4,j} &=(\sqrt[m]{-\beta},0), \quad r \Pi^{\prime}_{m+2}(\sqrt[m]{-\beta})  \alpha \beta c  \neq 0.
   \end{aligned}
  \end{equation}
For  $L_{v_2}$,
 \begin{equation}
  \begin{aligned}\label{frakq}
  \mathfrak{q}_1 &=(k,0), \quad r(\alpha c-\gamma)\neq 0,\\
  \mathfrak{q}_2 &=(0,0), \quad  r(\alpha c-\gamma)\neq 0.
   \end{aligned}
\end{equation} 

 Here the index $j$ in $\mathfrak{p}_{1,j}$ and  $\mathfrak{p}_{4,j}$ refers to the chosen branch  in  the $m$-root. 

\item[b)]  The nonresonance requirement  in Poincar\'e Theorem \ref{Thp},  \(\dsty \frac{\lambda_1}{\lambda_2}\notin\mathbb Q_{<0}, \lambda_1,\lambda_2\neq 0\)   for the Jacobian matrix at an equilibrium point $p$  is equivalent to the condition,
$$\frac{4D}{T^2}\notin\mathbb{R}\ \text{ or }\ \frac{4D}{T^2} >0\ \text{ or }\ \sqrt{1-\frac{4D}{T^2}}\notin\mathbb{Q}\   \text{ and  }\  D,T\neq 0,$$
where $D=\det(J(p))$ and $T=\operatorname{tr}(J(p))$.  In such a case, no analytic first integral exists  in a neighborhood of the given isolated equilibrium   point   when the parameters satisfy the   condition:

\begin{table}[H]
\centering
\begin{tabular}{|c|c|}
\hline
 $\mathfrak{p}_{1,j}$  &  $\dsty T=-\frac{r}{k}\Pi'_{m+2}(x_*)+ \frac{mr\alpha\beta c(x_*-k)}{k(\alpha c-\gamma)},
D=-\frac{m\alpha\beta^2\gamma r c(x_*-k)}{k(\alpha c-\gamma)}, x_*=\sqrt[m]{\frac{\beta  \gamma }{\alpha  c-\gamma }}$     \\ \hline
$\mathfrak{p}_2$& $\dsty T=-\frac{r}{k}\Pi'_{m+2}(k)-\gamma\beta+(\alpha c-\gamma)k^m,
D=\frac{r}{k}\Pi'_{m+2}(k)\bigl(\gamma\beta-(\alpha c-\gamma)k^m\bigr)$
 \\ \hline
  $\mathfrak{p}_3$ & $\dsty T=r\beta-\gamma\beta, D=-r\gamma\beta^2$    \\ \hline
 $\mathfrak{p}_{4,j}$ &  $T=-\frac{r}{k}\Pi'_{m+2}(x_*)-\alpha\beta c, 
D=\frac{r}{k}\Pi'_{m+2}(x_*)\alpha\beta c, x_*=\sqrt[m]{-\beta}$      \\ \hline
\end{tabular}
\caption{Trace and determinant for the Jacobian at the equilibrium point $\mathfrak{p}_i$ ($\beta\neq 0$)}
\label{betan0}
\end{table}

\begin{table}[H]
\centering
\begin{tabular}{|c|c|}
\hline
$\mathfrak{q}_{1}$ & $\dsty T=-r-\gamma+\alpha c, D=-r(-\gamma+\alpha c) $   \\ \hline
$\mathfrak{q}_2$ & $\dsty  T=r-\gamma+\alpha c, D=r(-\gamma+\alpha c) $ \\ \hline
\end{tabular}
\caption{Trace and determinant for the Jacobian at the equilibrium point $\mathfrak{q}_i$ ($\beta=0$) }
\label{bet0}
\end{table}

\end{itemize}

\end{theorem}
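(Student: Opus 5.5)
The proof is a direct computation in three steps: locate the equilibria of $L_{v_1}$ and $L_{v_2}$, compute the Jacobian at each, and rewrite the non-resonance condition of Theorem \ref{Thp} in terms of trace and determinant.

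\textbf{Equilibria of $L_{v_1}$.} The second component $\bigl(-\gamma\beta+(\alpha c-\gamma)x^m\bigr)y$ vanishes only if $y=0$ or $x^m=\beta\gamma/(\alpha c-\gamma)$.
\begin{itemize}
\item If $y=0$, the first component reduces to $-\frac{r}{k}\Pi_{m+2}(x)$. Its roots give $(0,0)$, $(k,0)$ and $(\sqrt[m]{-\beta},0)$.
\item If $x_*^m=\beta\gamma/(\alpha c-\gamma)$, solving the first component for $y$ gives $y_*=-r\Pi_{m+2}(x_*)/(k\alpha x_*^m)$. Substituting $x_*^m$ recovers the $y$-coordinate of $\mathfrak p_{1,j}$ in \eqref{frakp}.
\end{itemize}

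\textbf{Jacobians of $L_{v_1}$.} Write $P=-\frac rk\Pi_{m+2}-\alpha x^my$ and $Q=(-\gamma\beta+(\alpha c-\gamma)x^m)y$. Then
$$J=\begin{pmatrix}-\frac rk\Pi'_{m+2}-m\alpha x^{m-1}y & -\alpha x^m\\ m(\alpha c-\gamma)x^{m-1}y & -\gamma\beta+(\alpha c-\gamma)x^m\end{pmatrix}.$$
\begin{itemize}
\item On $y=0$ the Jacobian is upper triangular, so the eigenvalues are read off the diagonal: $-\frac rk\Pi'_{m+2}(x_0)$ and $-\gamma\beta+(\alpha c-\gamma)x_0^m$. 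This gives the $T$ and $D$ entries for $\mathfrak p_2,\mathfrak p_3,\mathfrak p_{4,j}$ in Table \ref{betan0}, after using $\Pi'_{m+2}(0)=-k\beta$ and $x_0^m=-\beta$ at $\mathfrak p_{4,j}$.
\item At $\mathfrak p_{1,j}$ the entry $J_{22}$ vanishes, so $D=\alpha x_*^m\cdot m(\alpha c-\gamma)x_*^{m-1}y_*$. Substituting $y_*$ and $\Pi_{m+2}(x_*)=x_*(x_*-k)(x_*^m+\beta)$, with $x_*^m+\beta=\alpha c\beta/(\alpha c-\gamma)$, simplifies $D$ to the tabulated expression.
\item The trace at $\mathfrak p_{1,j}$ is $J_{11}$. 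Its term $-m\alpha x_*^{m-1}y_*$ simplifies the same way, giving the tabulated $T$.
\end{itemize}
The stated side conditions are exactly $D\neq0$, i.e.\ both eigenvalues nonzero. The same conditions make the equilibria isolated, since a nonsingular Jacobian forces isolation.

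\textbf{The case $L_{v_2}$.} The second component forces $y=0$ whenever $\alpha c\neq\gamma$. The first component then gives $x=0$ or $x=k$. The Jacobian is triangular, with diagonal entries $-\frac rk(2x-k)$ and $\alpha c-\gamma$, which yields Table \ref{bet0}.

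\textbf{Part b).} The eigenvalues are $\lambda_{1,2}=\frac T2\bigl(1\pm\sqrt{1-4D/T^2}\bigr)$ when $T\ne0$. Put $s=\sqrt{1-4D/T^2}$, so that $\lambda_1/\lambda_2=(1+s)/(1-s)$. This is a Möbius map, so $\lambda_1/\lambda_2\in\mathbb Q$ iff $s\in\mathbb Q$. In that case the ratio is negative iff $|s|>1$, i.e.\ iff $4D/T^2<0$ is real.

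A resonance $\lambda_1/\lambda_2\in\mathbb Q_{<0}$ therefore occurs precisely when $4D/T^2$ is a negative real number with $s$ rational. Its negation is the stated disjunction. The boundary cases need separate care: $T=0$ gives ratio $-1$, which is resonant, and $D=0$ gives a zero eigenvalue. Both are excluded by requiring $D,T\neq0$.

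Theorem \ref{Thp} then shows that no local analytic first integral exists at each equilibrium under these conditions. The main obstacle is the algebraic simplification at $\mathfrak p_{1,j}$, which relies on consistently using $x_*^m=\beta\gamma/(\alpha c-\gamma)$. The remaining steps are routine.
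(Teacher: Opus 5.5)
Your proposal is correct and follows essentially the same route as the paper. You enumerate the equilibria, then compute the Jacobian, which is triangular on $y=0$ and has $J_{22}=0$ at $\mathfrak p_{1,j}$ once $x_*^m=\beta\gamma/(\alpha c-\gamma)$ is used, and this yields the tabulated $T$ and $D$. You then translate $\lambda_1/\lambda_2\notin\mathbb{Q}_{<0}$ through the rational M\"obius relation with $s=\sqrt{1-4D/T^2}$, which is exactly the content of the paper's Lemma~\ref{LF}. Your additional remarks, that the side conditions in \eqref{frakp}--\eqref{frakq} are precisely $D\neq 0$ and that a nonsingular Jacobian forces isolation, make explicit points the paper leaves implicit.
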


An isolated equilibrium point $p$ of a differential system of class $C^{k}(U)$ in the plane $\RR^2$ is a center if there exists a neighborhood $V\subset U$ of $p$ such that $U\setminus \{p\}$ is filled with periodic orbits. A difficult classical
problem in the qualitative theory of differential systems in the plane  is the problem of distinguish between a focus and a center. The following result was initiated by Poincar\'e and later rigorously established for analytic systems by Lyapunov.

\begin{theorem}[{Poincar\'e \cite{Poi81,Poi51}, Lyapunov \cite{Lya07}}]\label{Thp2}
Let  \eqref{S1} be a real planar analytic system with a singular point $(x_0,y_0)$ at which  the  linear part of the system  has purely imaginary eigenvalues. Then $(x_0,y_0)$ is a center if, and only if, there exists a non-constant local analytic first integral in a neighborhood of $(x_0,y_0)$. 
\end{theorem}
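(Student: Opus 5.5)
The plan is to reduce to a normal form and prove the two implications separately. The direction ``first integral $\Rightarrow$ center'' is essentially a Morse-type argument; the converse needs a convergence argument, which I expect to be the hard step. First translate $(x_0,y_0)$ to the origin and make a real linear change of coordinates and a rescaling of time. The system then becomes
$$\dot x=-y+P(x,y),\qquad \dot y=x+Q(x,y),$$
with $P,Q$ real analytic and of order $\geq 2$. Since the linear part is nondegenerate, the origin is an isolated singular point. If the first integral $H$ is complex valued, $L[H]\equiv 0$ and the reality of $L$ give $L[\operatorname{Re}H]=L[\operatorname{Im}H]=0$, and one of them is nonconstant. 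So we may assume $H$ is real and, subtracting a constant, $H(0)=0$.

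\emph{First integral $\Rightarrow$ center.} Let $H_k$ be the lowest-order nonzero homogeneous part of $H$. Comparing lowest-order terms in $L[H]=0$ gives $L_0[H_k]=0$ with $L_0=-y\partial_x+x\partial_y$. Hence $H_k$ is constant on circles, so $H_k=c(x^2+y^2)^{k/2}$ with $k$ even and $c\neq 0$. Replacing $H$ by $-H$ if necessary, we get $H=c\,r^k+O(r^{k+1})$ with $c>0$. Thus $H$ has a strict local minimum at the origin, and $\partial_r H>0$ on a punctured disc. Consequently, for small $h>0$ each level set $\{H=h\}$ near the origin is a single closed curve, a radial graph $r=\rho_h(\theta)$. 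Each such curve is invariant and contains no equilibrium. Near the origin we also have $\dot\theta=1+O(r)>0$, so the curve is a periodic orbit, and the origin is a center.

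\emph{Center $\Rightarrow$ analytic first integral.} Pass to polar coordinates. Because $\dot\theta=1+O(r)\neq 0$, the orbits satisfy an analytic equation
$$\frac{dr}{d\theta}=R(r,\theta)=\sum_{k\ge 2}R_k(\theta)\,r^k,$$
where $R_k$ is a trigonometric polynomial of degree $\le k+1$ with parity $\equiv k+1 \pmod 2$. This structure follows from $P,Q$ being power series in $x=r\cos\theta$, $y=r\sin\theta$.

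Let $r=\psi(\theta;\rho)$ be the solution with $\psi(0;\rho)=\rho$. It is analytic in $(\theta,\rho)$ for $|\rho|$ small and $\theta$ in a compact interval. The center hypothesis says the return map is the identity, $\psi(2\pi;\rho)=\rho$, so $\psi$ is $2\pi$-periodic in $\theta$. Invert in $\rho$ to obtain $\rho=\Phi(r,\theta)=r+\sum_{k\ge2}a_k(\theta)r^k$. This $\Phi$ is analytic in $r$, $2\pi$-periodic in $\theta$, and constant along orbits. The equation is invariant under $(r,\theta)\mapsto(-r,\theta+\pi)$, which gives $\Phi(-r,\theta+\pi)=-\Phi(r,\theta)$.

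The candidate first integral is $H(x,y)=\Phi(r,\theta)^2$. What must be shown is that $H$ is a genuine convergent power series in $(x,y)$, not just a function of $(r,\theta)$.

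\emph{Main obstacle: analyticity of $H$ at the origin.} I would first compute the $a_k$ recursively from the invariance equation $\partial_\theta\Phi+R\,\partial_r\Phi=0$. At each step $a_k'$ is a trigonometric polynomial of degree $\le k-1$ with the correct parity. The key point is that the constant Fourier term of $a_k'$ must vanish, since otherwise $a_k$ would contain a secular term $\theta$ and fail to be periodic. So $a_k$ is a trigonometric polynomial of degree $\le k-1$ with parity $\equiv k-1 \pmod 2$. Then each $r^k a_k(\theta)$ is a combination of terms $x^jy^l(x^2+y^2)^{(k-j-l)/2}$ with even exponent $k-j-l\ge 0$, and after squaring (to fix the overall parity) $H$ is a formal power series in $(x,y)$ beginning with $x^2+y^2$.

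For convergence, I would argue that the analyticity of $\Phi$ in $r$ on a complex disc $|r|<\varepsilon$, uniformly for $\theta$ in a complex strip, yields Cauchy bounds $|a_k(\theta)|\le C\varepsilon^{-k}$ on the strip. Because the trigonometric degree of $a_k$ is controlled by $k$, these bounds transfer to geometric bounds on the coefficients of the corresponding homogeneous polynomials in $(x,y)$. This gives convergence of $H$ on a neighborhood of the origin; this transfer is Lyapunov's convergence step, and it is the part I expect to require the most care.
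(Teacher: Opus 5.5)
The paper does not prove this statement; it quotes it as the classical Poincar\'e--Lyapunov theorem, so your argument has to stand on its own. Your direction ``first integral $\Rightarrow$ center'' is correct. The converse has a genuine gap.

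\textbf{The symmetry claim and the candidate $H=\Phi^2$.} With the normalization $\Phi(r,0)=r$, the identity $\Phi(-r,\theta+\pi)=-\Phi(r,\theta)$ is false in general, and $H=\Phi^2$ is in general \emph{not} analytic at the origin.
\begin{itemize}
\item The symmetry $(r,\theta)\mapsto(-r,\theta+\pi)$ of $dr/d\theta=R$ only gives $\Phi(-r,\theta+\pi)=\tau(\Phi(r,\theta))$.
\item Here $\tau(\rho)=-\psi(\pi;\rho)$ is the half-return map from the positive to the negative $x$-axis, written in the coordinate $x$.
\item The center hypothesis makes $\tau$ an analytic involution with $\tau'(0)=-1$, but not $-\mathrm{id}$.
\item In your recursion this appears as the integration constant fixed by $a_k(0)=0$. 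It is a frequency-zero term, of the wrong parity whenever $k$ is even.
\end{itemize}
A concrete counterexample is $\dot x=-y$, $\dot y=x+\tfrac32x^2$, which has the first integral $x^2+y^2+x^3$. Then $\Phi^2+\Phi^3=r^2+r^3\cos^3\theta$, so $a_2=\tfrac12(\cos^3\theta-1)$ and
$\Phi^2=x^2+y^2+x^3-(x^2+y^2)^{3/2}+O(r^4)$.
On the $y$-axis this equals $y^2-|y|^3+\cdots$, which is not analytic.

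\textbf{The degree bookkeeping.} This fails as well. Expanding $1/\dot\theta$ gives $R_k$ of trigonometric degree up to $3k-3$; in the example, $R_3=-\tfrac94\sin\theta\cos^5\theta$. Even a well-normalized radial integral such as $\sqrt{x^2+y^2+x^3}=r+\tfrac12r^2\cos^3\theta+\cdots$ has coefficients of degree larger than $k-1$. So analyticity cannot be read off from the coefficients of $\Phi$; it has to be proved for the square.

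\textbf{The missing idea.} Renormalize the transversal parameter so that the half-return map becomes $-\mathrm{id}$.
\begin{itemize}
\item Put $\kappa(\rho)=\tfrac12(\rho-\tau(\rho))$, so that $\kappa\circ\tau=-\kappa$ and $\kappa'(0)=1$, and set $\tilde\Phi=\kappa\circ\Phi$. Now $\tilde\Phi(-r,\theta+\pi)=-\tilde\Phi(r,\theta)$.
\item Then $\Psi=\tilde\Phi^2=\sum_{k\ge2}r^kb_k(\theta)$ is a genuine function on the plane, with $b_k(\theta+\pi)=(-1)^kb_k(\theta)$.
\item Induct on $\Psi$, not on $\tilde\Phi$, to show that each $\Psi_k=r^kb_k$ is a homogeneous polynomial. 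The degree-$k$ part of $L[\Psi]=0$ is $\partial_\theta\Psi_k=-\sum_{j=2}^{k-1}(P_j\partial_x+Q_j\partial_y)\Psi_{k+1-j}$. By the inductive hypothesis the right-hand side is a homogeneous polynomial of degree $k$, so $b_k$ has frequencies at most $k$.
\item Periodicity excludes secular terms. The integration constant is harmless for even $k$ and is killed by parity for odd $k$.
\end{itemize}

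\textbf{Convergence.} Once this is in place, the step you expected to be hardest is routine. Cauchy estimates give $\sup_{\theta\in\mathbb{R}}|b_k|\le C\varepsilon^{-k}$. The coefficients of $\Psi_k$ in the monomials $(x+iy)^j(x-iy)^{k-j}$ are Fourier coefficients of $b_k$, so they obey the same bound. Hence $\sum_k\Psi_k$ converges for $|x|+|y|<\varepsilon$.
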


In view of Theorem~\ref{Thp2}, the existence of center-type equilibria for system
\eqref{Ga1} can only occur at isolated equilibria for which  linearization has
purely imaginary eigenvalues, that is, when $T=0$ and $D>0$.
The explicit expressions for the trace and determinant provided in
Theorem~\ref{AI} allow us to clarify when this situation may arise.

\begin{remark}
From Tables~\ref{betan0}--\ref{bet0} it follows that, for the equilibria
$\mathfrak p_2,\mathfrak p_3,\mathfrak p_{4,j},\mathfrak q_1,\mathfrak q_2$,
the condition $T=0$ implies $D<0$, and hence these equilibria are necessarily
saddles and cannot be centers in the real plane.

For the equilibria $\mathfrak p_{1,j}$, the condition $T=0$ does not determine
the sign of $D$, and purely imaginary eigenvalues may occur. However, because
$\mathfrak p_{1,j}$ is defined via an $m$-th root, it represents a real equilibrium
of system~\eqref{Ga1} only for those branches $j$ for which
\[
x_*=\sqrt[m]{\frac{\beta\gamma}{\alpha c-\gamma}}\in\mathbb{R}.
\]
Consequently, the center--focus problem is meaningful only for such real branches.
The present results do not address this case.
\end{remark}

\begin{problem}
Theorem~\ref{AI} provides sufficient conditions ensuring the nonexistence of
analytic first integrals in a neighborhood of isolated equilibria whenever
the nonresonance condition holds.
However, these results do not cover the borderline cases in which this condition
fails, nor the situations for which the linearization has purely imaginary eigenvalues.
A natural open problem is therefore to characterize the integrability properties
of system \eqref{Ga1} under such resonance conditions. In particular:
\begin{itemize}
\item[(i)]
Determine whether analytic or meromorphic first integrals may exist in neighborhoods
of resonant equilibrium points.
\item[(ii)]
For those real equilibria $\mathfrak p_{1,j}$ satisfying $T=0$ and $D>0$,
determine whether the local dynamics exhibits center-type behavior in the sense
of real planar systems.
\end{itemize}
\end{problem}

 Under the parametric conditions $\gamma=0$ and $m=1$, system~\eqref{Ga1} can be
reduced to a Riccati differential equation for $x$ as a function of $y$.
This reduction is obtained by eliminating the independent variable and leads,
after a standard transformation, to a second--order linear differential
equation with fundamental solutions  given in terms of Bessel functions.
The existence of explicit first integrals for system~\eqref{Ga1} in this case is
therefore a direct consequence of this Riccati structure, as shown in the
following theorem.

\begin{theorem}\label{SFunct}
Assume that $\gamma=0$, $m=1$, $r,\alpha,c,k\in\mathbb{C}\setminus\{0\}$ and $\beta\in\mathbb{C}$.  
Let
\[
A(x,y)=\frac{1}{c}\sqrt{\frac{r}{\alpha k}}\,y^{\frac12}
J'_{\nu}\!\left(\frac{2}{c}\sqrt{\frac{r y}{\alpha k}}\right)
-\frac{r}{\alpha c k}\left(x-\frac{k-\beta}{2}\right)
J_{\nu}\!\left(\frac{2}{c}\sqrt{\frac{r y}{\alpha k}}\right),
\]
\[
B(x,y)=\frac{1}{c}\sqrt{\frac{r}{\alpha k}}\,y^{\frac12}
Y'_{\nu}\!\left(\frac{2}{c}\sqrt{\frac{r y}{\alpha k}}\right)
-\frac{r}{\alpha c k}\left(x-\frac{k-\beta}{2}\right)
Y_{\nu}\!\left(\frac{2}{c}\sqrt{\frac{r y}{\alpha k}}\right),
\]
where $J_\nu$ and $Y_\nu$ denote the Bessel functions of the first and second kind.
Let $U\subset\mathbb{C}^2$ be an open set on which $A$ and $B$ are analytic and do not vanish simultaneously.
Then the function
\[
H(x,y)=\frac{A(x,y)}{B(x,y)}
\]
is a first integral of system~\eqref{Ga1} in $U$.
\end{theorem}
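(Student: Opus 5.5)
With $\gamma=0$ and $m=1$, system \eqref{Ga1} reads
\[
\dot x = r x\Bigl(1-\frac{x}{k}\Bigr)-\frac{\alpha xy}{\beta+x},\qquad \dot y=\frac{c\,\alpha x y}{\beta+x}.
\]
I will eliminate time and regard $x$ as a function of $y$. Dividing, the factor $x/(\beta+x)$ in $\dot y$ cancels the same factor in the predation term. This gives
\[
\frac{dx}{dy}=\frac{r(1-x/k)(\beta+x)}{c\,\alpha y}-\frac{1}{c},
\]
which is a Riccati equation
\[
\frac{dx}{dy}=-\frac{r}{c\alpha k y}\,x^2+\frac{r(k-\beta)}{c\alpha k y}\,x+\frac{r\beta}{c\alpha y}-\frac1c .
\]
Completing the square, I set $x=u+\frac{k-\beta}{2}$. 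This removes the linear term and yields
\[
\frac{du}{dy}=-\frac{r}{c\alpha k y}\,u^2+\frac{C_0}{y}-\frac1c,
\]
where $C_0$ is a constant depending on $r,\alpha,c,k,\beta$.

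The standard linearization $u=\frac{c\alpha k}{r}\,y\,\frac{w'}{w}$ turns this into a second-order linear ODE of the form
\[
y\,w''+w'+\bigl(a-b/y\bigr)w=0,
\]
with explicit constants $a$ and $b$. Here $a$ is proportional to $r/(c^2\alpha k)$ and $b$ comes from $C_0$. I will then apply the change of variable $z=\frac{2}{c}\sqrt{ry/(\alpha k)}$, so that $z^2$ is proportional to $y$. This should reduce the equation to Bessel's equation $z^2 w_{zz}+z w_z+(z^2-\nu^2)w=0$, with $\nu^2$ determined by $b$. Its general solution is $w=C_1J_\nu(z)+C_2Y_\nu(z)$. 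I must choose the normalization of the substitution so that the argument and coefficients match exactly the $A$, $B$ in the statement. I will check that $y\,dw/dy=\tfrac z2\,dw/dz$, which produces the factor $y^{1/2}J'_\nu$ in the statement.

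Next I invert the linearization. The relation $\frac{r}{c\alpha k}u\,w=y\,w'$ holds for $w=C_1J_\nu+C_2Y_\nu$. Written out, it says exactly that $C_1A(x,y)+C_2B(x,y)=0$, up to an overall nonzero factor. Hence along each trajectory the ratio $A/B=-C_2/C_1$ is constant. To make this rigorous without relying on time elimination, I will verify $L[H]=0$ directly. Since $L$ is a derivation, $L[A]B-AL[B]=0$ is what must be shown. I would compute $L[A]$ and find it equals $\Phi\,A$ for the same multiplier $\Phi$ as for $B$. The identity reduces to Bessel's equation for $J_\nu$, and the identical computation with $Y_\nu$ in place of $J_\nu$ gives $L[B]=\Phi B$. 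The multiplier $\Phi$ may be rational in $x,y$ with a $\tfrac12$-power, but it is common to both. Then $L[A/B]=0$ on $U$ wherever $B\neq0$. On points of $U$ where $B=0$ but $A\neq 0$, I will use $B/A$ instead, or simply interpret $H$ as a meromorphic first integral. The statement's hypothesis that $A$ and $B$ do not vanish simultaneously exists exactly for this.

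Finally I must confirm that $H$ is nonconstant. This follows because the Wronskian of $J_\nu$ and $Y_\nu$ equals $2/(\pi z)\neq0$, so $A$ and $B$ are not proportional. The main obstacle is the bookkeeping in the second step: pinning down the constant $C_0$, and hence $\nu$, and the scaling constants so that the Bessel form emerges with exactly the argument $\frac2c\sqrt{ry/(\alpha k)}$. The degenerate case where $\nu$ is an integer causes no problem, since $Y_\nu$ remains independent of $J_\nu$.
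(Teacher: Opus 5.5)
Your proposal is correct and follows essentially the paper's route: reduce to the Riccati equation for $x(y)$, linearize via $x=\frac{\alpha c k y}{r}\frac{w'}{w}$, identify Bessel's equation in $z=\frac{2}{c}\sqrt{ry/(\alpha k)}$ with $\nu=\frac{r(k+\beta)}{c\alpha k}$, and read off $c_1A+c_2B=0$. Your shift $x=u+\frac{k-\beta}{2}$ is equivalent to the paper's prefactor $y^{r(k-\beta)/(2\alpha c k)}$. Your extra direct check also goes through: for the vector field of \eqref{Ga1} one gets $L[A]=\Phi A$ and $L[B]=\Phi B$ with the common rational multiplier $\Phi=-\frac{rx\,(x-\frac{k-\beta}{2})}{k(\beta+x)}$. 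Together with your nonconstancy argument, this supplies rigor the paper leaves implicit.
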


\begin{remark}[Global analytic extension on the Riemann surface]
Under the hypotheses of Theorem~\ref{SFunct}, the functions $A$ and $B$
involve the factors $y^{1/2}$, $J_{\nu}(z)$ and $Y_{\nu}(z)$, where
\[
z=\frac{2}{c}\sqrt{\frac{r y}{\alpha k}}.
\]
As $J_{\nu}$ and $J'_{\nu}$ are entire, while $y^{1/2}$ has a branch point at
$y=0$ and $Y_{\nu}$, $Y'_{\nu}$ possess a logarithmic singularity at $z=0$ together
with a branch cut (typically along the negative real axis in the $z$--plane),
the functions $A$ and $B$ are analytic precisely on
\[
\mathbb{C}^{2}\setminus S,
\]
where $S$ denotes the singularity set consisting of the hypersurface $\{y=0\}$
together with the preimage—under the map
$y\mapsto z=\frac{2}{c}\sqrt{\frac{r y}{\alpha k}}$ and the chosen branch of the square
root—of the branch cut of $Y_{\nu}$.

On $\mathbb{C}^{2}\setminus S$, the scalar first integral
\[
H(x,y)=\frac{A(x,y)}{B(x,y)}
\]
is in general multivalued due to the presence of branch points.
Choose a simply connected open set
\[
U_{0}\subset \mathbb{C}^{2}\setminus S
\]
on which a single branch of $H$ is fixed.
Analytic continuation of this branch along all paths in
$\mathbb{C}^{2}\setminus S$, glued together in the standard way,
produces a connected Riemann surface
\[
\pi:\mathcal{R}\longrightarrow \mathbb{C}^{2}\setminus S,
\]
where $\pi$ is the canonical projection.

The lifted function
\[
\widetilde H := H\circ \pi:\mathcal{R}\longrightarrow \mathbb{C}
\]
is then single--valued and holomorphic on $\mathcal{R}$, and it defines a global
analytic first integral of the lifted vector field on $\mathcal{R}$.
Each level set of $\widetilde H$ projects under $\pi$ onto a trajectory of the
original dynamical system on $\mathbb{C}^{2}\setminus S$.

Thus, although $H=\frac{A}{B}$ is multivalued on $\mathbb{C}^{2}$, it becomes a global
holomorphic first integral on its natural analytic--continuation Riemann surface
$\mathcal{R}$. This interpretation is consistent with the classical theory of
Liouvillian and Darboux-type first integrals, where multivalued invariants are
understood as global analytic objects defined on appropriate Riemann surfaces.
\end{remark}

%

\begin{remark}
The first integral $\dsty H=\frac{A}{B}$ is defined only up to Möbius transformations arising
from a change of the fundamental pair of solutions $(A,B)$.
Equivalently, one may regard the invariant quantity as the projective class
\[
H(x,y)=[\,A(x,y):B(x,y)]\in\mathbb{C}\mathbb{P}^1,
\]
which is well defined whenever $(A,B)\neq(0,0)$.
This projective formulation provides a coordinate-free description of the
invariant foliation associated with the Riccati reduction.

Moreover, although $H$ may be multivalued on $\mathbb{C}^{2}$ due to the presence
of branch points of the functions involved in $A$ and $B$, it becomes a
single-valued holomorphic map after lifting to the Riemann surface obtained by
analytic continuation along paths in $\mathbb{C}^{2}\setminus S$, where $S$ is
the singularity set of $A$ and $B$.
On this Riemann surface, the induced map into $\mathbb{C}\mathbb{P}^{1}$ defines
a global analytic first integral of the lifted vector field.
\end{remark}

%
%
%

Theorem~\ref{SFunct} provides, in particular, an explicit expression for an
analytic first integral in an open set $U\subset\mathbb{C}^{2}$ where the
functions $A$ and $B$ are analytic and do not vanish simultaneously.
This includes neighborhoods of nonisolated equilibrium points of the form
$(0,y)$ with $y\neq 0$, provided such points belong to $U$.
At these equilibria the Jacobian matrix of the linear part of the vector field
$L_{v_1}$ is singular, and therefore the existence of an analytic first integral
cannot be inferred from Poincar\'e’s theorem.
Theorem~\ref{SFunct} thus supplies analytic first integrals in a regime not
covered by the classical nonresonant linearization theory, complementing the
local analysis of analytic solutions near degenerate equilibria.

Besides analytic continuation, there is a recent alternative global approach due
to Gin\'e and Sinelshchikov (see, e.g., \cite{GinSi25}), where one constructs
global transcendental first integrals from a finite  family of   invariant (possibly
transcendental) curves whose cofactors satisfy a nontrivial linear relation.  If
such invariant curves were identified for the present system~\eqref{Ga1}, this
method  produces an expression for  non Liouvillian  first integrals—potentially written in terms of different special or transcendental functions for the special case studied in Theorem \ref{SFunct}—thus raising
the interesting problem of determining whether the system admits  such  invariant transcendental  curves
suitable for this construction.

\section{Liouvillian integrability} \label{S3}

Before entering the main analysis, we state an auxiliary algebraic lemma, which is needed in the proof of Theorem \ref{Liov} and plays a key role in excluding certain polynomial solutions.

\begin{lemma}\label{PolS}
Suppose  $(\alpha,r,c,k,\gamma,\beta)\in A$, $\dsty 
X(x)=\prod_{x_i\in I}(x-x_i)^{n_i}$, where $n_i\in\mathbb{Z}_{\ge 0}$ denotes the corresponding  multiplicity of the root $x_i$ of $X$, and assume $X(x)\neq x^n$ for any $n\in\mathbb{Z}_{\ge 0}$.  
Let $R_m$ be any polynomial of degree $m$.  
Then the differential equation
\begin{eqnarray}\label{i0+1}
-\frac{r}{k}\Pi_{m+2}(x)\, w^{\prime}(x)
+\left(
\frac{r}{k}\Pi_{m+2}(x)\frac{X^{\prime}(x)}{X(x)}
+R_m(x)
\right) w(x)
= -n\alpha x^{m-1}X(x) + \alpha x^m X^{\prime}(x),
\end{eqnarray}
has no polynomial solutions $w$.
\end{lemma}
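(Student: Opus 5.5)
The plan is to remove the logarithmic–derivative term $X'/X$ by a gauge change and then compare the two sides of the equation at $x=\infty$ and at the roots of $X$. Throughout, $n=\sum n_i=\deg X$ and $\operatorname{lc}(R_m)\neq 0$, and the argument never uses the particular shape of $R_m$ beyond $\deg R_m=m$.

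\emph{Reduction.} Since every $x_i\in I$ is a root of $\Pi_{m+2}$, the function $\Pi_{m+2}X'/X=\Pi_{m+2}\sum_i n_i/(x-x_i)$ is a polynomial, so \eqref{i0+1} is an identity in $\CC[x]$. Put $w=Xu$ with $u=w/X\in\CC(x)$. The terms $\mp\frac{r}{k}\Pi_{m+2}X'u$ cancel, and \eqref{i0+1} becomes
\[
-\frac{r}{k}\Pi_{m+2}(x)\,u'(x)+R_m(x)\,u(x)=\alpha x^{m-1}\,\frac{xX'(x)-nX(x)}{X(x)} .
\]
Since $xX'-nX=x^{n+1}(x^{-n}X)'$, its right-hand side vanishes identically exactly when $X=x^n$. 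This case is excluded, so the right-hand side is a nonzero rational function; in particular $u\not\equiv 0$, and hence $w\not\equiv 0$.

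\emph{Behaviour at infinity.} Write $j=\deg(xX'-nX)\le n-1$ and $u\sim c\,x^{e}$ with $e=\deg w-n$. On the left, the term $\Pi_{m+2}u'$ contributes $-\frac{r}{k}ec\,x^{m+1+e}$ and $R_mu$ contributes $\operatorname{lc}(R_m)c\,x^{m+e}$. The right-hand side has exact order $x^{m-1+j-n}\le x^{m-2}$.
\begin{itemize}
\item[(i)] If $e=0$, the left side has exact order $x^{m}$, because $\operatorname{lc}(R_m)\neq0$. This is a contradiction.
\item[(ii)] If $e\neq0$, matching orders gives $m+1+e=m-1+j-n$, that is, $\deg w=j-2\le n-3$.
\end{itemize}
So a polynomial solution must satisfy $0\le \deg w=j-2$, which forces $j\ge 2$.

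\emph{Local analysis at the roots.} Because $X\neq x^n$, there is a root $x_i\neq0$ of $X$. Provided $x_i$ is a simple root of $\Pi_{m+2}$, the right-hand side has a simple pole at $x_i$ with residue $\alpha x_i^{m}n_i\neq0$, using $\alpha\neq0$ from $A$.
\begin{itemize}
\item[(i)] If $u$ were regular at $x_i$, the left side would be regular there, which is impossible. Hence $u$ has a pole of some order $s\ge1$, equivalently $\operatorname{ord}_{x_i}w<n_i$.
\item[(ii)] With $u\sim a(x-x_i)^{-s}$, the coefficient of $(x-x_i)^{-s}$ on the left is $a\bigl(\frac{r}{k}s\,\Pi'_{m+2}(x_i)+R_m(x_i)\bigr)$. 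Matching pole orders forces either $s=1$, or $R_m(x_i)=-\frac{r}{k}s\,\Pi'_{m+2}(x_i)$ for that single value of $s$.
\end{itemize}
I will run the same bookkeeping at $x=0$ if $0\in I$, and treat separately the degenerate configurations in which $x_i$ is a multiple root of $\Pi_{m+2}$, namely $k^m=-\beta$ or $\beta=0$.

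\emph{Closing, and the main obstacle.} To finish, I will combine the degree bound $\deg w=j-2\le n-3$ with the lower bounds on the pole orders of $u=w/X$ at each root of $X$, which cap how much of $X$ can divide $w$. Concretely, I will compare the partial-fraction expansion of $u$ with that of the right-hand side, including its $x^{m-2},\dots$ terms. I will then count the linear conditions on the coefficients of $w$ coming from the top $m+1$ coefficients at infinity against those coming from the pole parts. The goal is to show that these conditions are incompatible with $\deg w\le n-3$.

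This last step is where I expect the real difficulty. The local conditions (ii) at the roots can be absorbed by a suitable value $R_m(x_i)$, so the contradiction must come from the global degree count. For $n=1$ it is immediate, since $\deg w=j-2<0$. For $n\ge2$ I would first try to show $j\le1$, for instance when all $x_i\neq0$ have the same multiplicity, and otherwise exploit the exact leading coefficients at infinity together with the residues $\alpha x_i^m n_i$.
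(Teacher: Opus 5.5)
\paragraph{Verdict.} Your proposal has a genuine gap, and the gap cannot be filled, because the statement as given is false.

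\paragraph{Where your steps fail.} The decisive closing step is only announced, not carried out. Beyond that, two of the steps you do carry out are wrong.

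First, you set $n=\sum n_i=\deg X$. In the lemma, however, $n$ is an independent nonnegative integer. In the application it is the exponent in $X_M=x^n$, and the paper's proof treats $n\neq n_1+\dots+n_I$ and $n=n_1+\dots+n_I$ separately. When $n\neq\deg X$, the right-hand side $\alpha x^{m-1}(xX'-nX)/X$ has exact order $x^{m-1}$ at infinity. Writing $e=\deg w-\deg X$, your count then gives only $\deg w=\deg X-2$, which is no contradiction.

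Second, your dismissal of the case $e=0$ is incorrect. If $u=c+d\,x^{-1}+\cdots$, then $-\frac{r}{k}\Pi_{m+2}u'$ contributes $\frac{r}{k}d\,x^{m}$, and this can cancel $\operatorname{lc}(R_m)\,c\,x^{m}$. So your claim that ``the case $n=1$ is immediate'' does not follow.

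\paragraph{Counterexamples.} These are not technicalities. For an arbitrary $R_m$ of degree $m$ the statement fails, precisely in the two regimes you discard. Take $m=1$ and $r=k=\beta=\alpha=1$, so that $\Pi_3=x^3-x$.
\begin{itemize}
\item With $X=x-1$, $n=1$ and $R_1=x+3$, the equation reads $-(x^3-x)w'+(x^2+2x+3)w=1$. It is solved by $w=(2-x)/6$, which is your case $e=0$.
\item With $X=(x-1)^2$, $n=0$ and $R_1=-4x$, it reads $-(x^3-x)w'+(2x^2-2x)w=2x^2-2x$. It is solved by $w=1$.
\end{itemize}

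\paragraph{The paper's proof fails at the same place.} The forced-zero factorization \eqref{p} is unjustified, and the second example violates it, since it would force $(x-1)\mid w$. The degree count based on \eqref{K} also never handles $\deg w=n_1+\dots+n_I$, where the leading coefficient of $T[x^j]$ vanishes.

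\paragraph{Why the lemma cannot be rescued.} Specializing to $R_m=-\gamma\beta+(\alpha c-\gamma)x^m$ does not help. Neither does imposing $w=\mathrm{const}\cdot x^n$, as happens when $j_0=M-1$. The second example is exactly that situation, with $\gamma=0$ and $c=-4$, which is a point of $A\setminus B$. There $(x-1)^2+y$ is an irreducible Darboux polynomial of $L_{v_1}$ with cofactor $-2x(x+1)$. A correct version of the lemma therefore needs genuinely new hypotheses, not merely a better proof.
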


 \proof

Write   equation  \eqref{i0+1} as 
\begin{eqnarray*}
 T[w]=-n\alpha x^{m-1}X(x)+\alpha x^mX^{\prime}(x),
\end{eqnarray*}
 where $T$ is the operator  acting on the vector  space   of  polynomials with complex coefficients  defined by the left-hand side of \eqref{i0+1}. A straightforward calculation shows that the main coefficient of $T[x^j], j\in  \ZZ_{j\geq 0}$ (that is, the coefficient of its highest–degree term) is $-\frac{r}{k}(-j+n_1+\ldots+n_I)$. Hence,  if $j\neq n_1+\ldots+n_I$,
\begin{eqnarray}\label{K}
\deg (T[x^j])=  j+m+1.
\end{eqnarray}

Let $n_1$ be the multiplicity of the root $x=0$ of $X$. Because $(\alpha,r,c,k,\gamma,\beta)\in A$, at least one $n_i(i>1)$ is nonzero. Therefore, 
$$\dsty X(x)= x^{n_1}\prod_{i=2}^{m+2}(x-x_i)^{n_i}.$$ 

If  $w$  is a polynomial solution, factor out all forced zeros imposed by the equation at each root of  $X$. One obtains the representation
\begin{eqnarray}\label{p}
w(x)=  x^{m-1}\prod_{x_i\in I}(x-x_i)^{n_i-1}u(x)
\end{eqnarray}
where $u$ is a polynomial.

Computing the degree of the right-hand side, one gets
\begin{eqnarray}\label{Altern}
\deg [ -n\alpha x^{m-1}X(x)+\alpha x^mX^{\prime}(x)]=
\begin{cases}
m+n_1+\ldots+n_I -1,  & \mbox{when} \quad n\neq n_1+\ldots+n_{I},\\
m+n_1+\ldots+n_I -2,  &  \mbox{when} \quad  n= n_1+\ldots+n_{I}.
\end{cases} 
\end{eqnarray}

Substitution of  \eqref{p} into \eqref{i0+1}, together with the use of  \eqref{K} and \eqref{Altern}, yields
\begin{itemize}
\item If $n\neq n_1+\ldots +n_I$, then  $|I|\in \{m+1,m+2\}$  and  $\deg [u]\leq 1$.
\item If $n= n_1+\ldots +n_I$, then  $|I|=m+2$  and  $\deg [u]=0$.
\end{itemize}
 
If  $\deg [u]=0$, the representation of $w$  above shows that $w$ cannot be a polynomial because it forces an inconsistent multiplicity at 
$x=0$.

Therefore we are left with the only remaining case:
$$|I|=m+2, n\neq n_1+\ldots +n_I, u(x)=ax+b. $$

  To prove that  $u(x)=ax+b$ does not define  a polynomial solution, assume that $|I|=m+2$. Then  $n_1\geq 1$, and  can write the right-hand side of \eqref{i0+1} as 
\begin{eqnarray}\label{indepT}
-n\alpha x^{m-1}X(x)+\alpha x^mX^{\prime}(x)=x^{m+n_1-1}\prod_{i=2}^{m+2}(x-x_i)^{n_i-1} \widetilde{\Pi}_3(x),
\end{eqnarray}
where $\widetilde{\Pi}_3$ is a polynomial of degree $m+1$ such that $\widetilde{\Pi}_3(0)\neq 0$.

In view of  \eqref{indepT},  the substitution 
\begin{eqnarray}\label{subst}
\dsty w(x)= x^{m-1}\prod_{i=1}^{m+2}(x-x_i)^{n_i-1}u(x)
\end{eqnarray}
 transforms  the differential equation \eqref{i0+1} into 
\begin{eqnarray}\label{i0+1,0}
x\widetilde{\Pi}_{1}(x) u^{\prime}(x)+( \widetilde{\Pi}_2(x)+ \widetilde{\Pi}(x))u(x)=
x \prod_{i=2}^{m+2}(x-x_i)\left(\alpha(n_1-n)+\alpha x \frac{\left( \prod_{i=2}^{m+2}(x-x_i)^{n_i}\right)^{\prime}}{\prod_{i=2}^{m+2}(x-x_i)^{n_i}}\right),
\end{eqnarray}
 where 
 \begin{eqnarray*}
\widetilde{\Pi}_1(x)&=& - \frac{r}{k}\prod_{i=2}^{m+2}(x-x_i),\\
 \widetilde{\Pi}_2(x)&=& \frac{r}{k}\Pi_{m+2}(x) \frac{X^{\prime}(x)}{X(x)}+R_m(x), \\
 \widetilde{\Pi}(x)&=& -\frac{r}{k} \left((n_1+m-2)\prod_{i=2}^{m+2}(x-x_i)+  x  \prod_{i=2}^{m+2}(x-x_i)  \frac{\left( \prod_{i=2}^{m+2}(x-x_i)^{n_i-1}\right)^{\prime}}{\prod_{i=2}^{m+2}(x-x_i)^{n_i-1}   }   \right).
 \end{eqnarray*} 
 
The substitution of $u(x)=ax+b$ into \eqref{i0+1,0} and comparison of the coefficients of $x^{m+2}$, together with  $r\neq 0$, yield
$$a=\frac{k\alpha(n_1+\ldots +n_{m+2}-n)}{2r }.$$
On the other hand, by setting $x=0$, one gets 
 \begin{eqnarray*} 
b=
\begin{cases}
0, & \text{if } R_m(0)+r\beta(m-2)\neq 0,\\
\text{const.}, & \text{if } R_m(0)+r\beta(m-2)=0.
\end{cases} 
\end{eqnarray*}
  By assuming  that  $R_m(0)+r\beta (m-2)\neq 0$,  one obtains from \eqref{subst} that the polynomial solution to \eqref{i0+1} is 
$$\dsty w(x)= \frac{k\alpha(n_1+\ldots +n_{m+2}-n)}{2r }x^m\prod_{i=1}^{m+2}(x-x_i)^{n_i-1},$$ 
 which is not possible, because the multiplicity of  $x=0$ of the polynomial solution of  \eqref{i0+1}  is $m-1$. 
 
 If $R_m(0)+r\beta (m-2)= 0$, then the same substitution yields that any polynomial solution would have the form
 $$ \dsty w(x)=-\frac{k\alpha(n_1+\ldots +n_{m+2}-n)}{2r }x^m\prod_{i=1}^{m+2}(x-x_i)^{n_i-1}+\mbox{const.}x^{m-1}\prod_{i=1}^{m+2}(x-x_i)^{n_i-1}.  $$
This  implies that $\dsty x^{m-1}\prod_{i=1}^{m+2}(x-x_i)^{n_i-1}$ is in the kernel of the operator $T$, which is not possible by relation  \eqref{K}. Therefore no polynomial solution exists in this subcase either. This completes the proof. \lqqd

  \proof \emph{(Of Theorem \ref{Liov})}
 
Let   $f_l(x,y)=0, l=1,2$ be an  algebraic invariant curve of  the vector field $L_{v_l}$. Write  
\begin{eqnarray}\label{f}
  \dsty f_l(x,y)=\sum_{j=0}^{M}X_j(x)y^j,
\end{eqnarray}  
where $X_j\in\CC[x]$ and $X_M\not\equiv 0$. For brevity we omit the index $l$ when no confusion arises.

Because $L_{v_l}[x^i y^j]$ increases the power of $y$ by at most one, any cofactor $\mathfrak c_l(x,y)$ of an invariant polynomial must satisfy $\deg_y\mathfrak c_l\le 1$. Hence we may write
\[
\mathfrak c_l(x,y)=P(x)+Q(x)\,y.
\]

Substitution of \eqref{f} into the invariance relation $L_{v_l}[f_l]=\mathfrak c_l f_l$ and the comparison of the coefficients of powers of $y$ yield the following identities.

For $L_{v_1}$ (and $M>0$) we obtain
\begin{multline}\label{HS}
-\frac{r}{k}\Pi_{m+2}(x)  X_{0}^{\prime}(x) +\sum_{j=1}^{M}\left(-\frac{r}{k}\Pi_{m+2}(x)X_{j}^{\prime}(x)-\alpha x^{m}X^{\prime}_{j-1}(x)-(j\gamma\beta-j(\alpha c-\gamma)x^m)X_{j}(x)\right)y^{j} - \alpha x^mX^{\prime}_{M}(x)y^{M+1}=\\
P(x)X_0(x)+\sum_{j=1}^{M}(P(x)X_j(x)+Q(x)X_{j-1}(x))y^j+Q(x)X_{M}(x)y^{M+1}, \quad M>0.
\end{multline} 
For $L_{v_2}$ (and $M>0$) we obtain
\begin{multline}\label{HS2}
-\frac{r}{k}x(x-k)  X_{0}^{\prime}(x) +\sum_{j=1}^{M}\left(-\frac{r}{k}x(x-k)  X_{j}^{\prime}(x)-\alpha X^{\prime}_{j-1}(x)+j(\alpha c-\gamma)X_{j}(x)\right)y^{j} - \alpha  X^{\prime}_{M}(x)y^{M+1}=\\
P(x)X_0(x)+\sum_{j=1}^{M}(P(x)X_j(x)+Q(x)X_{j-1}(x))y^j+Q(x)X_{M}(x)y^{M+1}, \quad M>0.
\end{multline} 
If $M=0$, the summation symbol  is omitted.

\vspace{6pt}\noindent\textbf{Case $M=0$.}  Then $f=X_0(x)$, and comparison of terms of equal degree in $y$ in \eqref{HS} yields for $L_{v_1}$
\begin{eqnarray}
\begin{aligned}\label{M=0,1}
X_0(x)&=\mbox{const.} x^n,\\
P(x)&=-\frac{nr}{k}(x-k)(x^m+\beta), Q(x)=-n\alpha x^{m-1},
\end{aligned}
\end{eqnarray}
with $n\in\ZZ_{>0}$. For $L_{v_2}$ one obtains only the trivial invariant (constant), 
\begin{eqnarray}
\begin{aligned}\label{M=0,2}
X_0(x)&=\mbox{const.},\\
P(x)&=0, Q(x)=0.
\end{aligned}
\end{eqnarray} 
Hence no nontrivial algebraic curve with $M=0$ appears for $L_{v_2}$.

\vspace{6pt}\noindent\textbf{Case $M>0$.} Let $j_0\ge0$ be the minimal index with $X_{j_0}\not\equiv 0$. 
 
\smallskip\noindent If $j_0=M$,  comparison of  the coefficients of $y^M$ and $y^{M+1}$ in \eqref{HS} yields the system
\begin{eqnarray*} 
 \frac{r}{k}\Pi_{m+2}(x) X_{j_0}^{\prime}(x)+(P(x)+(j_0\gamma\beta-j_0(\alpha c-\gamma)x^m))X_{j_0}(x)&=&0, \\
  \alpha x^mX^{\prime}_{j_0}(x) +Q(x)X_{j_0}(x)&=&0
\end{eqnarray*}
which implies 
\begin{eqnarray}
\begin{aligned}\label{Q0}
X_{j_0}(x)&= \mbox{const.}x^n,\\
  P(x)&=-\frac{nr}{k}(x-k)(x^m+\beta)-(j_0\gamma\beta-j_0(\alpha c-\gamma)x^m), Q(x)=-n\alpha x^{m-1}
\end{aligned}
\end{eqnarray}
 where   $n\in \ZZ_{\geq 0}$.
 
For $L_{v_2}$ an analogous comparison yields
\begin{eqnarray*} 
 -\frac{r}{k} x(x-k) X_{j_0}^{\prime}(x)+(P(x)-j_0(\alpha c-\gamma))X_{j_0}(x)&=&0, \\
  \alpha X^{\prime}_{j_0}(x) +Q(x)X_{j_0}(x)&=&0
\end{eqnarray*}
which implies
\begin{eqnarray}
\begin{aligned}\label{Q0,2}
X_{j_0}(x)&= \mbox{const.},\\
  P(x)&=j_0(\alpha c-\gamma); Q(x)=0.
\end{aligned}
\end{eqnarray}

\smallskip\noindent If $j_0<M$, comparison of  the coefficient of $y^{j_0}$ in \eqref{HS} yields
\begin{eqnarray}\label{Eq1}
-\frac{r}{k}\Pi_{m+2}(x) \frac{X_{j_0}^{\prime}(x)}{X_{j_0}(x)}=P(x)-(j_0\gamma\beta-j_0(\alpha c-\gamma)x^m).
\end{eqnarray}
Thus,
\begin{eqnarray}
\begin{aligned}\label{XP}
X_{j_0}(x)&=\mbox{const.} \prod_{x_i\in I}(x-x_i)^{n_i}, \quad n_i\in \ZZ_{\geq0},\\
P(x)&=-\frac{r}{k}\Pi_{m+2}(x) \frac{X_{j_0}^{\prime}(x)}{X_{j_0}(x)}-j_0\gamma\beta+j_0(\alpha c-\gamma)x^m.
\end{aligned}
\end{eqnarray}

A comparison of  the coefficients of $y^{M+1}$ yields 
$$-\alpha x^m\frac{X^{\prime}_{M}(x)}{X_{M}(x)}=Q(x),$$
so, 
\begin{eqnarray}
\begin{aligned}\label{Q}
X_{M}(x)&= \mbox{const.} x^n, n\in \ZZ_{\geq0}, \\
Q(x)&=-n\alpha x^{m-1}.
\end{aligned}
\end{eqnarray}

Comparison of  the coefficient of $y^{j_0+1}$ in \eqref{HS},  after substitution of \eqref{XP} and \eqref{Q},  yields the linear inhomogeneous ODE,
\begin{eqnarray}\label{Lem1}
-\frac{r}{k}\Pi_{m+2}(x) X_{j_0+1}^{\prime}(x)+\left( \frac{r}{k}\Pi_{m+2}(x) \frac{X^{\prime}_{j_0}(x)}{X_{j_0}(x)}-\gamma\beta+(\alpha c-\gamma)x^m\right)X_{j_0+1}(x)=-n\alpha x^{m-1}X_{j_0}(x)+\alpha x^mX^{\prime}_{j_0}(x). 
\end{eqnarray}

If $j_0=M-1$ and the right-hand side of \eqref{Lem1} vanishes, then $X_{j_0}=x^n$. In this exceptional situation, substitution into \eqref{Lem1} leads to an identity that cannot hold unless the parameter relation in $B$ occurs; therefore it is impossible when $(\alpha,r,c,k,\gamma,\beta)\notin B$.

If $j_0=M-1$ and the right-hand side of \eqref{Lem1} does not vanish, Lemma \ref{PolS} (applied with the appropriate choice of polynomials) implies that no polynomial solution $X_{j_0+1}$ exists when $(\alpha,r,c,k,\gamma,\beta)\in A$.

If $j_0<M-1$ and $X_{j_0}$ satisfies the exceptional relation $-n\alpha x^{m-1}X_{j_0}+\alpha x^mX_{j_0}'=0$, replace $j_0$ by $j_0+1$ and repeat the above argument. The iteration either reaches an exceptional case (which forces $(\alpha,r,c,k,\gamma,\beta)\in B$) or, at some step, falls into the nonexceptional case handled by Lemma \ref{PolS}. In the latter situation Lemma \ref{PolS} again yields nonexistence of polynomial solutions for the corresponding $X_{j_0+1}$ when $(\alpha,r,c,k,\gamma,\beta)\in A$  and $\beta\neq0$. Thus no polynomial invariant arises.

An analogous analysis holds for $L_{v_2}$.   For  $j_0<M$,  comparison of  the coefficients  of $y^{j_0}$ in \eqref{HS2} yields  that $P$ and $X_{j_0}$ satisfy
\begin{eqnarray}\label{Eq1,2}
-\frac{r}{k}x(x-k) \frac{X_{j_0}^{\prime}(x)}{X_{j_0}(x)}=P(x)-j_0(\alpha c-\gamma).
\end{eqnarray}
Hence,
\begin{eqnarray}
\begin{aligned}\label{XP,2}
X_{j_0}(x)&=\mbox{const.}x^{n_1}(x-k)^{n_2},\\
P(x)&=-\frac{r}{k}x(x-k)\frac{X_{j_0}^{\prime}(x)}{X_{j_0}(x)}+j_0(\alpha c-\gamma).
\end{aligned}
\end{eqnarray}

Comparison of    the coefficients  associated to $y^{M+1}$ implies
$$\alpha X^{\prime}_{M}(x)+ Q(x) X_{M}(x)=0.$$
Hence,
\begin{eqnarray}
\begin{aligned}\label{Q,2}
X_{M}(x)&=\mbox{const.}\neq 0, \\
Q(x)&=0,P(x)=M(\alpha c-\gamma).
\end{aligned}
\end{eqnarray}

By virtue of  \eqref{XP,2} and \eqref{Q,2}, we obtain from \eqref{HS2} the coefficient associated with  $y^{j_0+1}$
\begin{eqnarray}\label{Lem2}
-\frac{r}{k}x(x-k)  X_{j_0+1}^{\prime}(x)+\left(\frac{r}{k}x(x-k)\frac{X_{j_0}^{\prime}(x)}{X_{j_0}(x)}+\alpha c-\gamma \right)X_{j_0+1}(x)= \alpha X^{\prime}_{j_0}(x).
\end{eqnarray}

 For  $j_0=M-1$, if  $X_{j_0}=\mbox{const.}$   we have that  equation \eqref{Lem2} implies
$$    \alpha c-\gamma=0,$$  
 which is not  possible by hypothesis because $(\alpha,r,c,k,\gamma,\beta)\notin B$.

If  $j_0=M-1$ and $X_{j_0}\neq \mbox{const.}$,  take $n=0, R_m(x)= (\alpha c-\gamma)x^m$, and divide by $x^m$ in Lemma \ref{PolS}. It follows  that  there are no polynomial solutions to \eqref{Lem2}.

If   $j_0<M-1$, we may assume  $X_{j_0}\neq \mbox{const.}$ as well, otherwise, equation \eqref{Lem2} for $X_{j_0+1}$ reduces to 
$$-\frac{r}{k}x(x-k) w^{\prime}(x)+\left(\alpha c-\gamma \right)w(x)=0,$$
whose general solution is $\dsty w(x)= \mbox{const.}\left( \frac{x-k}{x } \right)^{ \frac{ \alpha c - \gamma }{r} }$  which  is not a polynomial because $(\alpha,r,c,k,\gamma,\beta)\in A\setminus B$. Hence, by taking $n=0, R_m(x)= (\alpha c-\gamma)x^m$, and dividing by $x^m$ in Lemma \ref{PolS} we deduce that  there is no polynomial solution to \eqref{Lem2}.

Collecting the above results we deduce that the only possible irreducible Darboux polynomials are those obtained in the $M=0$ and $j_0=M$ cases. This proves the theorem. \lqqd

\begin{lemma}\label{FPS,3}
Assume  $(\alpha,r,c,k,\gamma,\beta)\in A\setminus B$, and $m\geq 1$.  Let    $L_{v_1}$ and $  L_{v_2}$ be as in \eqref{vf} and $L_1:=L_{v_1}, L_{2}:=x^m L_{v_2}$.  Suppose $L_{v_1}$ and $  L_{v_2}$ admit exponential factors
\[
F_i=e^{f_i/g_i},\qquad f_i,g_i\in\CC[x,y],\quad i=1,2,
\]
with $f_i,g_i$ coprime. Then    $g_i$ is the Darboux polynomial  given  by \eqref{f(x,y)} and 
\begin{align}\label{A1}
\deg[f_i]\leq n_1+n_2+1,
\end{align}
where  $n_1$ and  $n_2$ are   the exponents of the variables $x$ and   $y$ respectively of  $g_i$.
\end{lemma}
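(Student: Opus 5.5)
By Proposition~\ref{Link}, if $F_i=e^{f_i/g_i}$ is an exponential factor of $L_{v_i}$, then $g_i$ is a Darboux polynomial of $L_{v_i}$. Multiplying the vector field by a polynomial preserves Darboux polynomials; only the cofactor is multiplied by the same polynomial. So the statement also holds for $L_2=x^mL_{v_2}$. Theorem~\ref{Liov} together with \eqref{f(x,y)} therefore gives $g_1=x^{n_1}y^{n_2}$ and $g_2=y^{n_2}$, up to a constant factor. The first assertion of the lemma follows at once, and the work lies in the degree bound \eqref{A1}.

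For the bound I would use relation \eqref{def[f]}, $L_i[f_i]=f_iK_{g_i}+g_iK_{F_i}$, where the cofactor $K_{F_i}$ has degree at most $M-1$. Here $M=m+2$ for $L_1$, and $M=m+2$ for $L_2$ as well after multiplication by $x^m$. I would write $f_i=\sum_{j=0}^{N}Y_j(x)y^j$ and expand in powers of $y$, in the same way as \eqref{HS}. Then I would compare the top-degree homogeneous parts of both sides. Let $d=\deg f_i$ and let $\hat f$ be the leading form of $f_i$. The leading homogeneous part of $L_1$ is $-\frac{r}{k}x^{m+2}\partial_x+(\alpha c-\gamma)x^my\partial_y$, while the term $-\alpha x^my\partial_x$ has degree $m+1$ and is therefore lower. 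Comparing degree-$(d+m+1)$ terms gives
$$\hat L[\hat f]=\hat K_{g}\hat f+[\text{top part of } g_iK_{F_i}],$$
where the last bracket is present only if $n_1+n_2+m+1\ge d+m+1$. So if $d>n_1+n_2$, the equation becomes homogeneous: $\hat f$ is a Darboux polynomial of the leading homogeneous field with cofactor $\hat K_g=-\frac{n_1r}{k}x^{m+1}+n_2(\alpha c-\gamma)x^m$. The leading field acts diagonally on monomials, $\hat L[x^ay^b]=(-\frac{r}{k}a\,x^{m+1}+b(\alpha c-\gamma)x^m)x^ay^b$. This forces $\hat f=\mathrm{const}\cdot x^{n_1}y^{n_2}$, which is incompatible with $d>n_1+n_2$ when $r\neq0$. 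I would then repeat the comparison one degree lower and try to show by the same diagonal argument that $d\le n_1+n_2+1$. If needed, I would also use coprimality of $f_i$ and $g_i$ to rule out $\hat f$ being a multiple of the monomial $x^{n_1}y^{n_2}$ while the remaining terms stay consistent.

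The main obstacle is this homogeneous comparison. The field $L_1$ is not homogeneous, and the term $-\alpha x^my\partial_x$ has degree $m+1$, only one less than the leading part, so it mixes neighbouring degree levels; the same happens for $L_2$ with the terms $\frac{r}{k}kx^{m+1}$ and $-\alpha x^my$. The first thing I would try is to weight the grading so that $\deg y$ differs from $\deg x$, chosen to make that term part of the leading weighted part. Alternatively, I would work coefficientwise in $y$ as in the proof of Theorem~\ref{Liov}: bound $\deg_y f$ using the $y^{N+1}$ coefficient, which gives $-\alpha x^mY_N'=Q\,Y_N+\dots$, and then bound the $x$-degree of each $Y_j$ from the $y^j$ equations, whose leading coefficient is $-\frac{r}{k}(\deg Y_j-n_1)+\dots$. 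The extra $+1$ in \eqref{A1} should come from the freedom allowed by $K_{F_i}$ when the exponent matches exactly.
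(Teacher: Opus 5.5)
\textbf{There is a genuine gap, and on part of $A\setminus B$ the bound is false.}

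\textbf{1. The top-degree comparison does not give what you claim.} Write $L_{v_1}[x]=xK_x$ and $L_{v_1}[y]=yK_y$, with $K_x=-\frac{r}{k}(x-k)(x^m+\beta)-\alpha x^{m-1}y$ and $K_y=-\gamma\beta+(\alpha c-\gamma)x^m$. Then $K_g=n_1K_x+n_2K_y$, and you have the exact identity
\[
L_{v_1}[f]-K_gf=K_x\,(x\partial_x-n_1)f+K_y\,(y\partial_y-n_2)f,
\]
where $x\partial_x-n_1$ and $y\partial_y-n_2$ preserve degree. Only $-\frac{r}{k}x^{m+1}(x\partial_x-n_1)$ raises degree by $m+1$. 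The term $(\alpha c-\gamma)x^m(y\partial_y-n_2)$ raises it by $m$, exactly like the piece $-\alpha x^{m-1}y(x\partial_x-n_1)$ that you discarded. Your $\hat K_g$ likewise mixes degrees $m+1$ and $m$.

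So for $d>n_1+n_2$ the degree-$(d+m+1)$ comparison gives only $\hat f=a\,x^{n_1}y^{d-n_1}$. That is perfectly compatible with $d>n_1+n_2$. As written, your argument would give $\deg f\le n_1+n_2$ whenever $\alpha c\neq\gamma$. That is false: $e^{y}$ is an exponential factor of $L_{v_1}$ with $g=1$.

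\textbf{2. The real content is one level lower, and you do not reach it.} For $d\ge n_1+n_2+2$ you get
\[
\tfrac{r}{k}\,x^{m+1}(x\partial_x-n_1)f_{d-1}=(\alpha c-\gamma)(d-n_1-n_2)\,a\,x^{m+n_1}y^{d-n_1}.
\]
This forces $a=0$ only when $n_1=0$ and $\alpha c\neq\gamma$; an analogous two-level argument handles $L_{v_2}$. If $n_1\ge1$, the coefficient of $x^{n_1-1}y^{d-n_1}$ in $f_{d-1}$ absorbs the right-hand side. You would then have to descend further and use $x\nmid f$, and the proposal does none of this.

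\textbf{3. A smaller point on $L_2$.} Multiplying by $x^m$ preserves Darboux polynomials only in one direction. Indeed $x$ is a Darboux polynomial of $x^mL_{v_2}$, and $e^{1/x}$ is an exponential factor of $x^mL_{v_2}$ when $m\ge2$. So you must keep $F_2$ an exponential factor of $L_{v_2}$ itself.

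\textbf{4. No completion can work on all of $A\setminus B$.} Take $\alpha c=\gamma$ and $\gamma\beta\neq0$; this is allowed, since $B$ requires both quantities to vanish. Then $L_{v_1}[y^d]=-d\gamma\beta\,y^d$. Hence $e^{y^d}$ with $2\le d\le m+1$ is an exponential factor, since its cofactor has degree $d\le M-1=m+1$. Here $g=1$, i.e.\ $n_1=n_2=0$, yet $\deg f=d\ge2$, contradicting \eqref{A1}.

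\textbf{5. The paper's own proof has the same gap.} It is only the degree count $\deg L_i[f_i]=\deg f_i+m+\kappa_1$, $\deg f_iK_{g_i}=\deg f_i+m+\kappa_2$, $\deg g_iK_{F_i}\le n_1+n_2+m+1$. It never addresses cancellation between $L_i[f_i]$ and $f_iK_{g_i}$, which is precisely the obstacle you identified. Its first degree claim already fails for $f=y^2$ in the case above. So the statement needs an extra hypothesis such as $\alpha c\neq\gamma$. Even then, the case $n_1\ge1$ requires the deeper analysis sketched in point 2.
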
 

 \proof
By Proposition \ref{Link}, each denominator $g_i$ is a Darboux polynomial of the corresponding polynomial vector field, so  by Theorem \ref{Liov},  $g_1$ and $g_2$ have  the expression given by \eqref{f(x,y)} with   associated cofactors $K_{g_1}=\mathcal{K}_{g_1}$ and $K_{g_2}= x^m \mathcal{K}_{g_2}$ respectively. By relation  \eqref{def[f]}, 
 \begin{eqnarray*} 
 L_i[f_i]-f_iK_{g_i}=g_iK_{F_i},
 \end{eqnarray*}
where   $K_{F_i}$  denotes  the cofactor of  $F_i$.
 
 We have that 
\begin{eqnarray*}
\deg[L_{i}[f_i]]&= &\deg[f_i]+ m+\kappa_1,
\end{eqnarray*} 
where $\kappa_1 \in \{0,1\}$.

 From the expressions for   \eqref{f(x,y)}   we deduce that, 
\begin{eqnarray*}
\deg[f_iK_{g_i}]=\deg [f_i]+m+\kappa_2,
\end{eqnarray*} 
where $\kappa_2 \in \{0,1\}$.

Because $\deg[K_{F_i}]\leq m+1$, one also has  
\begin{eqnarray*}
\deg[g_iK_{F_i}]&\leq &n_1+n_2+m+1.
\end{eqnarray*} 
  Hence, from the preceding relations we obtain the desired inequality.
 \lqqd

  \proof \emph{(Of Theorem \ref{Liov2})}
  
We prove that there are no Darbouxian integrating factors for \(L_{v_i}\); it then follows immediately that \(L_{v_i}\) is not Liouvillian integrable, by \cite[Th.~8.17, p.~228]{DFL} and \cite[Th.~3.11, p.~134]{Zh17}. 
To give a unified treatment of the proof, we replace \(L_{v_2}\) by \(x^m L_{v_2}\) and denote by \(L_w\) the unique vector field that coincides with \(L_{v_1}\) when \(\beta\neq 0\) and with \(L_{v_2}\) when \(\beta=0\).
Assume that
\begin{equation}\label{I1}
I(x,y)=\prod_{j=1}^{J_1}\pi_j^{\mu_j}(x,y) e^{R(x,y)}, 
\qquad 
R(x,y)=\frac{g(x,y)}{h(x,y)}, 
\quad g,h\in \mathbb{C}[x,y],\ \mu_j\in \mathbb{C},
\end{equation}
is a Darbouxian integrating factor.
By \cite[Prop.~6]{LlVZ16}, each \(\pi_j\) is an irreducible Darboux polynomial.
Hence, by Theorem \ref{Liov}, we can write \eqref{I1} in the form
\[
I(x,y)=x^{\lambda_1}y^{\lambda_2}e^{R(x,y)}.
\]
Moreover, by the same proposition, \(h\) is also a Darboux polynomial.
By Theorem \ref{Liov}, we can express \(h(x,y)=x^{n_1}y^{n_2}\), with
\(n_1,n_2\in \mathbb{Z}_{\geq 0}\)  when \(\beta\neq 0\), and
\(h(x,y)=x^{m}y^{n_2}\), with \(n_2\in \mathbb{Z}_{\geq 0}\), when \(\beta=0\).
Finally, if \(I=\prod_{j=1}^{J_1}\pi_j^{\mu_j} e^{R}\) is an integrating factor of a polynomial
vector field \(L=p\partial_x+q\partial_y\) and the \(\pi_j\) are Darboux polynomials,
then \(e^{R}\) is necessarily an exponential factor. Indeed, 
 the representation of $I$ in the form  \(I=Pe^{R}\), where \(P=\prod_{j=1}^{J_1}\pi_j^{\mu_j}\), together with the identity 
\(L(I)=-I\,\operatorname{div}(p,q)\), yields
\(L(e^{R})=K e^{R}\) with \(K\in\mathbb{C}[x,y]\).

Put $\dsty g(x,y)=\sum_{n=0}^{N}a_n(x)x^n$.  Then 
\begin{eqnarray}\label{Rdev}
R(x,y)=\sum_{n=-n_2}^{N-n_2}\frac{a_{n+n_2}(x)}{x^{n_1}}y^n, \end{eqnarray}  
where $n_1=m$ when $\beta=0$.

Equation \eqref{LH} for  the  integrating factor associated to the vector field $L_{w}$, after canceling the term $x^{\lambda_1-1}y^{\lambda_2-1}e^{R(x,y)}$, can be expressed as 
\begin{eqnarray}\label{IntFE}
\sum_{n=0}^{N+1}D_{n}(x)y^{n-n_2+1}=  \left(\frac{r}{k}\Pi_{m+2}'(x) + \gamma\beta - (\alpha c - \gamma)x^m\right)xy + \alpha m x^{m} y^2.
\end{eqnarray}
Notice  that
\begin{eqnarray}\label{F1}
N-n_2\geq 0, 
\end{eqnarray}
because otherwise,  for $N-n_2< 0$, from equation \eqref{IntFE} we deduce that the coefficient associated to $y^2$ in the right-hand side must be null, which is not possible because $(\alpha,r,c,k,\gamma,\beta)\in A$, that is $\alpha\neq 0$.

After some   calculations we have
\begin{eqnarray}\label{Dn}
\begin{aligned}
 D_{0}(x)&=  \begin{cases}
 \dsty P_{m+2}(x)-xL\left[\frac{a_{0}(x)}{x^{n_1}}\right],  &  n_2=0,\\
  \dsty -xL\left[\frac{a_{0}(x)}{x^{n_1}}\right],  &  n_2>0,
 \end{cases} \\
 D_{n}(x)&=  \begin{cases}
\dsty  -\alpha x^{m+1} \left(\frac{a_{n-1}(x)}{x^{n_1}}\right)^{\prime}-xL\left[\frac{a_{n}(x)}{x^{n_1}}\right], & 0<n\leq N, \; n\neq n_2, \; n\neq  n_2+1,\\
\dsty  -\alpha x^{m+1}\left(\frac{a_N(x)}{x^{n_1}}\right)^{\prime},&  n=N+1, \, n_2\neq N,\\
\dsty  -\alpha\lambda_1 x^m-\alpha x^{m+1}\left(\frac{a_N(x)}{x^{n_1}}\right)^{\prime},&  n=N+1, \, n_2= N,
 \end{cases} \\
  D_{n}(x)&=  \begin{cases}
  \dsty P_{m+2}(x) -\alpha x^{m+1} \left(\frac{a_{n-1}(x)}{x^{n_1}}\right)^{\prime}-xL\left[\frac{a_{n}(x)}{x^{n_1}}\right], &  n= n_2,\, n_2\leq N,\\
\dsty -\alpha \lambda_1x^m-xL\left[\frac{a_{n}(x)}{x^{n_1}}\right] -\alpha x^{m+1}\left(\frac{a_{n-1}(x)}{x^{n_1}}\right)^{\prime},&  n=n_2+1,\,  n_2+1\leq N,
 \end{cases} \\
L\left[\frac{a_{n}(x)}{x^{n_1}}\right]&= \frac{r}{k}\Pi_{m+2}(x)\left(\frac{a_{n}(x)}{x^{n_1}}\right)^{\prime}+(n-n_2)( \gamma\beta- (\alpha c-\gamma)x^m )\frac{a_{n}(x)}{x^{n_1}},\\
P_{m+2}(x)&=- \frac{\lambda_1r}{k}\Pi_{m+2}(x)- \lambda_2(\gamma\beta x-(\alpha c-\gamma)x^{m+1}).
\end{aligned}
 \end{eqnarray}

In view of  \eqref{Dn},  under assumption \eqref{F1}, we must solve in polynomials  the set of differential equations
\begin{eqnarray} 
\left. \begin{array}{r}\label{Set1}
 \dsty P_{m+2}(x)-xL\left[\frac{a_{0}(x)}{x^{n_1}}\right]= \left(\frac{r}{k}\Pi_{m+2}'(x) + \gamma\beta - (\alpha c - \gamma)x^m\right)x, \quad n=0,\\
\dsty  -\alpha \lambda_1x^m -\alpha x^{m+1}\left(\frac{a_{0}(x)}{x^{n_1}}\right)^{\prime}=\alpha m x^{m}, \quad n=1
\end{array} \right\} N=0   \\
\left. \begin{array}{r} \label{Set2}
 \dsty P_{m+2}(x)-xL\left[\frac{a_{0}(x)}{x^{n_1}}\right]= \left(\frac{r}{k}\Pi_{m+2}'(x) + \gamma\beta - (\alpha c - \gamma)x^m\right)x, \quad n=0,\\
\dsty -\alpha \lambda_1x^m  -\alpha x^{m+1} \left(\frac{a_{n-1}(x)}{x^{n_1}}\right)^{\prime}-xL\left[\frac{a_{n}(x)}{x^{n_1}}\right]=\alpha m x^{m},  \quad n=1,\\
\dsty  -\alpha x^{m+1} \left(\frac{a_{n-1}(x)}{x^{n_1}}\right)^{\prime}-xL\left[\frac{a_{n}(x)}{x^{n_1}}\right]=0, \quad 1<n\leq N,N\neq 1,\\
\dsty  -\alpha x^{m+1}\left(\frac{a_N(x)}{x^{n_1}}\right)^{\prime}=0, \quad n=N+1
\end{array} \right\} N>0
 \end{eqnarray}
for $n_2=0$. For $n_2>0$ we obtain 
\begin{eqnarray}
 \left. \begin{array}{r}\label{Set3}
\dsty L\left[\frac{a_{0}(x)}{x^{n_1}}\right]=0, \quad n=0,\\
 \dsty  -\alpha x^{m+1} \left(\frac{a_{n-1}(x)}{x^{n_1}}\right)^{\prime}-xL\left[\frac{a_{n}(x)}{x^{n_1}}\right]=0,  \quad 0<n\leq N, n\neq n_2,n_2+1,\\
\dsty P_{m+2}(x)-\alpha x^{m+1} \left(\frac{a_{n-1}(x)}{x^{n_1}}\right)^{\prime}-xL\left[\frac{a_{n}(x)}{x^{n_1}}\right]= \left(\frac{r}{k}\Pi_{m+2}'(x) + \gamma\beta - (\alpha c - \gamma)x^m\right)x, \quad n=n_2,\\
\dsty  -\alpha \lambda_1x^m-\alpha x^{m+1} \left(\frac{a_{n-1}(x)}{x^{n_1}}\right)^{\prime}-xL\left[\frac{a_{n}(x)}{x^{n_1}}\right]=\alpha mx^m, \quad n=n_2+1,\\
\dsty   -\alpha x^{m+1}\left(\frac{a_N(x)}{x^{n_1}}\right)^{\prime}=0, \quad n=N+1
 \end{array} \right\} n_2<N 
  \end{eqnarray}
  \begin{eqnarray}
 \left. \begin{array}{r}\label{Set4}
 \dsty L\left[\frac{a_{0}(x)}{x^{n_1}}\right]=0, \quad n=0,\\
  \dsty  -\alpha x^{m+1} \left(\frac{a_{n-1}(x)}{x^{n_1}}\right)^{\prime}-xL\left[\frac{a_{n}(x)}{x^{n_1}}\right]=0,  \quad 0<n < N,\\
 \dsty P_{m+2}(x)-\alpha x^{m+1} \left(\frac{a_{n-1}(x)}{x^{n_1}}\right)^{\prime}-xL\left[\frac{a_{n}(x)}{x^{n_1}}\right]= \left(\frac{r}{k}\Pi_{m+2}'(x) + \gamma\beta - (\alpha c - \gamma)x^m\right)x, \quad n=N,\\
\dsty -\alpha \lambda_1x^m  -\alpha x^{m+1}\left(\frac{a_N(x)}{x^{n_1}}\right)^{\prime}=\alpha mx^m, \quad n=N+1. 
   \end{array} \right\} n_2=N.
 \end{eqnarray}

For system  \eqref{Set1}, it follows immediately  from  the last equation that $\lambda_1=-m$ and $\dsty \frac{a_{0}(x)}{x^{n_1}}=\mbox{const.}$, which is incompatible with the first equation.

Next, we consider systems \eqref{Set2} and \eqref{Set3}.   Let $N-n_2=\delta\geq 1$. For $n=N+1>n_2$ we have 
$$-\alpha x^{m+1}\left(\frac{a_N(x)}{x^{n_1}}\right)^{\prime}=0,$$
which implies that $a_N(x)=\mbox{const.}x^{n_1} \neq 0$. By   Lemma \ref{FPS,3}, one has    $\deg[a_N(x)y^{N}]\leq n_1+n_2+1$, which   implies that   $\delta=1$. Let us  consider

\begin{itemize}
\item  The case $n_2>0$.   From \eqref{Set3}, for $n=n_2+1=N$,  one has that $a_{N-1}$ is the polynomial  solution of  
$$\dsty  -\alpha \lambda_1x^m-\alpha x^{m+1}\left(\frac{a_{N-1}(x)}{x^{n_1}}\right)^{\prime}-xL\left[\mbox{const.}\right]=\alpha mx^m ,$$  
because $\dsty \frac{a_{N-1}(x)}{x^{n_1}}=\mbox{const.}$ is not a solution, it follows that  $\lambda_1=-m$. The equation then simplifies to 
$$-\alpha  \left(\frac{a_{N-1}(x)}{x^{n_1}}\right)^{\prime}=(-\frac{\gamma\beta}{x^m}+\alpha  c-\gamma)\mbox{const.}, $$
which has polynomial solution if and only if $\gamma\beta=0$. In that case,  $\dsty a_{N-1}(x)=\mbox{const.}_1x^{n_1+1}+\mbox{const.}_2x^{n_1}$. For $n=n_2$, the equation that defines $a_{N-2} $ reduces  to 
$$\frac{mr}{k}(x-k)(x^m+\beta) +\lambda_2((\alpha c-\gamma)x^{m})-\alpha x^{m}\left(\frac{a_{N-2}(x)}{x^{n_1}}\right)^{\prime}-\frac{r}{k}\Pi_{m+2}(x)\frac{\alpha  c-\gamma}{\alpha}\mbox{const.}_1=  \frac{r}{k}\Pi_{m+2}'(x)  - (\alpha c - \gamma)x^m,$$
whose solution can be expressed as 
$$\frac{a_{N-2}(x)}{x^{n_1}}=-\frac{r(\alpha c-\gamma)}{3k\alpha^2}\mbox{const.}_1x^3+\mbox{lower order terms}.$$
By hypothesis, $\gamma\beta$, and $\alpha c-\gamma$ do not vanish simultaneously, and $r\neq 0$. This implies that $\deg[a_{N-2}]=n_1+3$,  which is not possible by  Lemma \ref{FPS,3}. Therefore,   \eqref{Set3} is not solvable by polynomials. 

\item The case $n_2=0$.  From \eqref{Set2}, for $n=N$,  one has that $a_{N-1}$ is the polynomial  solution of 
\begin{equation*} 
\begin{aligned} 
 \dsty  -\alpha x^{m} \left(\frac{a_{N-1}(x)}{x^{n_1}}\right)^{\prime}-L\left[\mbox{const.} \right]&=0, \quad  N>1,\\
 \dsty -\alpha \lambda_1x^m -\alpha x^{m+1} \left(\frac{a_{N-1}(x)}{x^{n_1}}\right)^{\prime}-xL\left[\mbox{const.} \right]&=\alpha mx^m, \quad N=1, 
\end{aligned} 
\end{equation*} 
  which  implies that   $\dsty a_{N-1}(x)=\frac{N(\alpha  c-\gamma)}{\alpha}\mbox{const.}x^{n_1+1}+\mbox{const.}_1x^{n_1}$ and  $\lambda_1=-m, \gamma\beta=0$ if $m>1$, and $\dsty\lambda_1=\frac{\gamma\beta-\alpha }{\alpha}$ if $m=1$.

By  Lemma \ref{FPS,3} and the fact that $n_2=0$, one gets $\deg[a_{N-1}y^{N-1}]\leq n_1+1$, which gives that $N\leq 1$. As the  system \eqref{Set1} is not solvable, one has $N=1$. But then, from \eqref{Set2}, for $n=0$, we have that $a_0$ must solve
$$ \dsty P_{m+2}(x)-xL\left[\frac{a_{0}(x)}{x^{n_1}}\right]= \left(\frac{r}{k}\Pi_{m+2}'(x) + \gamma\beta - (\alpha c - \gamma)x^m\right)x. $$
Substitution of  $a_{N-1}$ (corresponding to $N=1$) into this  expression yields 
\begin{multline*} \dsty  -\frac{\lambda_1r}{k}(x-k)(x^m+\beta)-\lambda_2(\gamma\beta -(\alpha c-\gamma)x^{m})-\frac{r}{k}\Pi_{m+2}(x) \frac{(\alpha  c-\gamma)\mbox{const.}}{\alpha} =\\ \frac{r}{k}\Pi_{m+2}'(x)  +\left(\left(1-\mbox{const.}\frac{\alpha c-\gamma}{\alpha}\right)x-\mbox{const.}_1\right)(\gamma\beta- (\alpha c - \gamma)x^m), 
\end{multline*}
which is not possible in view of  $(\alpha,r,c,k,\gamma,\beta)\in A\setminus B$.
\end{itemize}

  Therefore, systems \eqref{Set2} and \eqref{Set3} do not have polynomial solutions for $N>n_2$.

On account of \eqref{F1}, we have now the unique possibility 
$$N-n_2=0, \quad N\neq 0. $$
  From \eqref{Set4}, for $n=N+1$, we deduce that  $\lambda_1=-m$ and  $a_N(x)=\mbox{const.}x^{n_1}$. For   $n=N$, we see that  $a_{N-1}$  is the solution of 
$$ \dsty P_{m+2}(x)-\alpha x^{m+1} \left(\frac{a_{N-1}(x)}{x^{n_1}}\right)^{\prime}-xL\left[\mbox{const.}\right]= \left(\frac{r}{k}\Pi_{m+2}'(x) + \gamma\beta - (\alpha c - \gamma)x^m\right)x. $$
Noting  that the term $L\left[\mbox{const.}\right]$ is null, we obtain 
$$ \alpha \left(\frac{a_{N-1}(x)}{x^{n_1}}\right)^{\prime}= \left( r \beta (1 - m) - \beta \gamma (\lambda_2 + 1) \right) x^{-m}
+ \frac{(m - 2) r \beta}{k} x^{ - m+1} 
+ \left( r + c \alpha - \gamma +c \alpha \lambda_2 - \gamma \lambda_2 \right) -\frac{2r}{k} x.
$$
In such a case, we  have that $a_{N-1}$ can be expressed as 
\begin{eqnarray}\label{an-1}
a_{N-1}(x)=-\frac{r}{\alpha k} x^{2+n_1}+\mbox{lower order terms}.  
\end{eqnarray}
Assuming that $N=1$ and that $a_{N-1}$ is a polynomial,  we have from \eqref{an-1} that $a_0$ must satisfy 
$$ \dsty L\left[\frac{a_{0}(x)}{x^{n_1}}\right]=0. $$
This is not possible because in   $\dsty L\left[\frac{a_{0}(x)}{x^{n_1}}\right](0)$ the coefficient of  $x^{m+3}$  is $\dsty -\frac{2}{\alpha }\left(\frac{r}{k}\right)^2$, and by hypothesis $r\neq 0$.

On the other hand, if  $N\geq 2$,   one has for $n=N-1$
$$ \dsty  -\alpha x^{m}\left(\frac{a_{N-2}(x)}{x^{n_1}}\right)^{\prime}-L\left[\frac{a_{N-1}(x)}{x^{n_1}}\right]=0. $$
Assuming that this equation has  a polynomial solution, a straightforward calculation shows that 
$$\deg[a_{N-2}]=4+n_1. $$
This violates the inequality of Lemma \ref{FPS,3}.

Therefore, the set of differential equations \eqref{Set1}, \eqref{Set2}, \eqref{Set3}, and \eqref{Set4}  is not solvable in polynomials. So,  there  are no Darbouxian integrating factors for $L_{v_i}$.

  The special case $(\alpha,r,c,k,\gamma,\beta)\notin A$ reduces trivially to  items a) and b),  whereas for  $(\alpha,r,c,k,\gamma,\beta)\in B$,   $L_{v_i}$ only depends on the derivative of variable $x$, which is  item c). This completes the  proof of the theorem.  \lqqd

  \proof (\emph{Of Corollary \ref{Abel}})

 Let $U\subset\CC\times\CC$ be an open subset and  $F(x,y), (x,y)\in U$  a  Liouvillian first integral of the differential equation \eqref{AbDE}.  Then 
$$ \frac{\partial F}{\partial x} + \frac{\partial F}{\partial y}\frac{dy}{dx}=0, $$
and hence $L_{v_i}[F]=0, i=1,2$ in $U$. This implies  that $F$ is a Liouvillian first integral of the system \eqref{Ga1}.  By  Theorem \ref{Liov}  is not Liouvillian integrable, which completes the proof. \lqqd

 \section{Analytic first integrals} \label{S4}

\begin{lemma}\label{LF}
Let \(t\in\mathbb{C}\). Fix any branch of the square root and set
\[
s:=\sqrt{1-t},
\]
assumed so that the numerator or denominator below are nonzero, i.e. $s\neq \pm1$. Define
\[
F(t)=\frac{-1+s}{-1-s}=\frac{1-s}{1+s}.
\]
Then
\[
F(t)\notin\mathbb{Q}_{<0}
\quad\Longleftrightarrow\quad
t\notin\mathbb{R}\ \text{ or }\ t>0\ \text{ or }\ s\notin\mathbb{Q}.
\]

\end{lemma}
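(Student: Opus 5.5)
The plan is to prove the contrapositive of both directions at once. Negating the right-hand side, the lemma is equivalent to
\[
F(t)\in\mathbb{Q}_{<0}\quad\Longleftrightarrow\quad t\in\mathbb{R},\ t\le 0,\ s\in\mathbb{Q}.
\]
The tool is the Möbius map $s\mapsto F=\frac{1-s}{1+s}$, which is its own inverse. Since $1-s=-(1+s)$ has no solution, we always have $F\neq -1$. Solving for $s$ then gives $s=\frac{1-F}{1+F}$. Both maps have rational coefficients, so $s\in\mathbb{Q}$ if and only if $F\in\mathbb{Q}$, and $s\in\mathbb{R}$ if and only if $F\in\mathbb{R}$.

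($\Rightarrow$) Suppose $F=q\in\mathbb{Q}_{<0}$. As noted, $q\neq -1$, so $s=\frac{1-q}{1+q}$ is rational and $t=1-s^2$ is real. To locate $t$, I use the identity
\[
(1-s)(1+s)=1-s^2=t.
\]
The product $(1-s)(1+s)$ has the same sign as $F=\frac{1-s}{1+s}$, because it equals $F\,(1+s)^2$ with $(1+s)^2>0$ for real $s\neq-1$. Since $F<0$, this gives $t=1-s^2<0\le 0$, as required.

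($\Leftarrow$) Suppose $t\in\mathbb{R}$, $t\le0$ and $s\in\mathbb{Q}$. Then $s^2=1-t\ge 1$. The value $t=0$ would force $s=\pm1$, which the hypothesis excludes, so in fact $s^2>1$ and $t<0$. Now $F$ is rational because $s$ is, and $F=\frac{1-s^2}{(1+s)^2}$ has negative numerator and positive denominator, so $F<0$. Hence $F\in\mathbb{Q}_{<0}$.

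There is no real obstacle here. The only points needing care are the excluded values $s=\pm1$ (equivalently $t=0$, which cannot occur on either side) and the observation that the result does not depend on the chosen branch of the square root. Replacing $s$ by $-s$ replaces $F$ by $1/F$, which preserves membership in $\mathbb{Q}_{<0}$, and the condition $s\in\mathbb{Q}$ is likewise unchanged under $s\mapsto -s$.
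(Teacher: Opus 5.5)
Your proof is correct and follows essentially the same route as the paper: you recover $s=(1-F)/(1+F)\in\mathbb{Q}$ from $F$ and then read off the sign of $t=1-s^2$ from the sign of $F$ (you via $t=F(1+s)^2$, the paper via $|s|>1$), with the converse run the same way. Your explicit treatment of $t=0$ is slightly more careful than the paper: negating $t>0$ for real $t$ gives $t\le 0$, and you rule out $t=0$ because it forces $s=\pm1$, whereas the paper passes directly to $t<0$.
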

\proof
It suffices to prove, 
\[
F(t)\in\mathbb{Q}_{<0}
\quad\Longleftrightarrow\quad
t\in\mathbb{R},\ t<0,\ \text{and }s\in\mathbb{Q}.
\]
Write \(s=\sqrt{1-t}\), so that
\[
F(t)=\frac{1-s}{1+s}.
\]

Suppose first that \(F(t)\in\mathbb{Q}_{<0}\). We  have $\dsty s=\frac{1-F(t)}{1+F(t)}$ which implies  \(s\in\mathbb{Q}\).   Moreover, \(F(t)<0\) implies \(|s|>1\), so that
\[
t=1-s^{2}<0,\qquad t\in\mathbb{R}.
\]
Thus \(t\) is real negative and \(s\) is rational.

Conversely, if \(t\in\mathbb{R}\), \(t<0\), and \(s\in\mathbb{Q}\), then \(|s|>1\). Therefore
\[
F(t)=\frac{1-s}{1+s}\in\mathbb{Q},\qquad F(t)<0,
\]
so \(F(t)\in\mathbb{Q}_{<0}\).

This proves
\[
F(t)\in\mathbb{Q}_{<0}\;\;\Longleftrightarrow\;\;t\in\mathbb{R},\;t<0,\;\sqrt{1-t}\in\mathbb{Q},
\]
and negating gives the stated condition for \(F(t)\notin\mathbb{Q}_{<0}\). \lqqd

\proof \emph{(Of Theorem \ref{AI})}

Solving $L_{v_1}=0$ and $L_{v_2}=0$ yields the equilibrium points listed in \eqref{frakp} and \eqref{frakq}.

The Jacobian matrices are
\[
J_{v_1}(x,y)=\begin{pmatrix}
 -\frac{r}{k}\Pi'_{m+2}(x)-m\alpha x^{m-1}y & -\alpha x^m \\
 m(\alpha c-\gamma)x^{m-1}y & -\gamma \beta+(\alpha c-\gamma)x^m
\end{pmatrix},
\]
and
\[
J_{v_2}(x,y)=\begin{pmatrix}
 -\frac{r}{k}(2x-k) & -\alpha \\
 0 & -\gamma+\alpha c
\end{pmatrix}.
\]

For a \(2\times2\) the  characteristic polynomial may be written as \(\lambda^2 +B\lambda + C\), where \(B=-\operatorname{tr}(J)\) and \(C=\det(J)\). 

Set \(\lambda_{1,2}=(-B\pm\sqrt{B^2-4C})/2\). The condition $C\neq 0$ guarantees that at least one   eigenvalue is  nonzero. Moreover, we may  assume $B\neq 0$. Indeed, if  $B=0$, then  $\dsty\frac{\lambda_1}{\lambda_2}=-1$. Hence,  if $B\neq 0$, one has
\[
\frac{\lambda_1}{\lambda_2}=\frac{-1+\sqrt{1-\frac{4C}{B^2}}}{-1-\sqrt{1-\frac{4C}{B^2}}}, 
\]
where we fix  any branch of the root.  It follows from Lemma \ref{LF}  that the nonresonance requirement   \(\dsty \frac{\lambda_1}{\lambda_2}\notin\mathbb Q_{<0}, \lambda_1,\lambda_2\neq 0\)   for the Jacobian matrix $J$ at an equilibrium point $p$  is equivalent to 
$$\frac{4D}{T^2}\notin\mathbb{R}\ \text{ or }\ \frac{4D}{T^2} >0\ \text{ or }\ \sqrt{1-\frac{4D}{T^2}}\notin\mathbb{Q}\   \text{ and  }\  T,D\neq 0, $$
where \(T=-\operatorname{tr}(J)\) and \(D=\det(J)\).

Below we compute  $T$ and  $D$ at each equilibrium and apply the previous criterion.

\begin{itemize}
\item At \(\mathfrak p_{1,j}\), put \(x_*= \sqrt[m]{\beta\gamma/(\alpha c-\gamma)}\)). The Jacobian equals
\[
J(\mathfrak p_{1,j})=\begin{pmatrix}
 -\frac{r}{k}\Pi'_{m+2}(x_*)+ \dfrac{mr}{k x_*}\Pi_{m+2}(x_*)& -\dfrac{\alpha\beta\gamma}{\alpha c-\gamma}\\[6pt]
 -\dfrac{m(\alpha c-\gamma)r\Pi_{m+2}(x_*)}{kx_*\alpha}& 0
\end{pmatrix}.
\]
Thus
\[
T=-\frac{r}{k}\Pi'_{m+2}(x_*)+ \frac{mr\alpha\beta c(x_*-k)}{k(\alpha c-\gamma)},\qquad
D=-\frac{m\alpha\beta^2\gamma r c(x_*-k)}{k(\alpha c-\gamma)}.
\]
Applying the condition above yields the first row of Table \ref{betan0}. 

\item At \(\mathfrak p_2=(k,0)\) the Jacobian is
\[
J(\mathfrak p_2)=\begin{pmatrix}
 -\frac{r}{k}\Pi'_{m+2}(k)& -\alpha k^m \\[4pt]
 0 & -\gamma\beta+(\alpha c-\gamma)k^m
\end{pmatrix},
\]
so
\[
T=-\frac{r}{k}\Pi'_{m+2}(k)-\gamma\beta+(\alpha c-\gamma)k^m,\qquad
D=\frac{r}{k}\Pi'_{m+2}(k)\bigl(\gamma\beta-(\alpha c-\gamma)k^m\bigr),
\]
which yields the second row of Table \ref{betan0} via the same test.

\item At \(\mathfrak p_3=(0,0)\),
\[
J(\mathfrak p_3)=\begin{pmatrix} r\beta & 0\\ 0 & -\gamma\beta\end{pmatrix},
\]
hence \(T=r\beta-\gamma\beta\), \(D=-r\gamma\beta^2\), which gives the third row.

\item At \(\mathfrak p_{4,j}=(\sqrt[m]{-\beta},0)\) (write \(x_*=\sqrt[m]{-\beta}\)),
\[
J(\mathfrak p_{4,j})=\begin{pmatrix}
 -\frac{r}{k}\Pi'_{m+2}(x_*)&\alpha\beta\\[4pt]
 0 & -\alpha\beta c
\end{pmatrix},
\]
so
\[
T=-\frac{r}{k}\Pi'_{m+2}(x_*)-\alpha\beta c,\qquad
D=\frac{r}{k}\Pi'_{m+2}(x_*)\alpha\beta c,
\]
giving the fourth row.

\end{itemize}

Finally, we perform the analogous explicit computations for \(L_{v_2}\) in the \(\beta=0\) case, 
$$J(\mathfrak{q}_1)= \left(
\begin{array}{cc}
 -r & -\alpha \\
0 & -\gamma + \alpha c \\
\end{array}
\right),  J(\mathfrak{q}_2)= \left(
\begin{array}{cc}
 r & -\alpha \\
0 & -\gamma + \alpha c \\
\end{array}
\right) $$
 to obtain Table \ref{bet0}.

This completes the proof.
\lqqd

 \proof (\emph{Of Theorem \ref{SFunct}})
 
For $\gamma=0$, from  \eqref{Ga1} we obtain  the Riccati differential equation 
$$
  \frac{dx}{dy}=-\frac{r}{\alpha cky}x^2+\frac{r}{\alpha cy}\left(1-\frac{\beta}{k}\right)x+\frac{\beta r}{\alpha cy}-\frac{1}{c}.
$$ 
If   $r,\alpha,c,k \in \CC\setminus\{0\}$,  by \cite[p.103]{Hi97}, the map 
\begin{eqnarray}\label{map1}
\dsty x(y)\mapsto \frac{\alpha cky}{r}\frac{w^{\prime}(y)}{w(y)}
\end{eqnarray}
 transforms  the Riccati equation into the second order linear equation 
\begin{eqnarray}\label{Tric}
w^{\prime\prime}+\frac{1}{y}\left(1-\frac{r}{\alpha c}+\frac{r\beta}{\alpha ck}\right)w^{\prime}+\left(\frac{r}{\alpha c^2 ky}-\frac{\beta r^2}{\alpha^2 c^2ky^2}\right)w=0.
\end{eqnarray}
By relations  \cite[(13),(16) p.251]{MaBa53}, the general solution  of \eqref{Tric} is given by
$$w(y)=y^{\frac{r(k-\beta)}{2\alpha ck}}\left(c_1J_{\nu}\left(  \frac{2}{c}\sqrt{\frac{ry}{\alpha k}}  \right)+c_2Y_{\nu}\left(  \frac{2}{c}\sqrt{\frac{ry}{\alpha k}}  \right)  \right). $$

 In view of \eqref{map1},  the Riccati equation admits  the explicit solution 
$$x= \frac{\alpha cky}{r}\frac{w^{\prime}(y)}{w(y)},$$
which  leads to the relation 
$$\frac{c_1\frac{1}{c}\sqrt{\frac{r}{\alpha k}}y^{-\frac{1}{2}}   J'_{\nu}\left(  \frac{2}{c}\sqrt{\frac{ry}{\alpha k}}  \right)+c_2\frac{1}{c}\sqrt{\frac{r}{\alpha k}}y^{-\frac{1}{2}}  Y'_{\nu}\left(  \frac{2}{c}\sqrt{\frac{ry}{\alpha k}} \right)}{c_1J_{\nu}\left(  \frac{2}{c}\sqrt{\frac{ry}{\alpha k}}  \right)+c_2Y_{\nu}\left(  \frac{2}{c}\sqrt{\frac{ry}{\alpha k}}\right) }=\frac{r}{y\alpha c k}\left(x-\frac{k-\beta}{2}\right). $$
For   $y\neq 0$,  the above  identity is equivalent to
\begin{multline*}
 c_1\left(\frac{1}{c}\sqrt{\frac{r}{\alpha k}}y^{\frac{1}{2}}   J'_{\nu}\left(  \frac{2}{c}\sqrt{\frac{ry}{\alpha k}}  \right)-    \frac{r}{\alpha c k}\left(x-\frac{k-\beta}{2}\right)J_{\nu}\left(  \frac{2}{c}\sqrt{\frac{ry}{\alpha k}}  \right)\right)+\\
 c_2\left( \frac{1}{c}\sqrt{\frac{r}{\alpha k}}y^{\frac{1}{2}}   Y'_{\nu}\left(  \frac{2}{c}\sqrt{\frac{ry}{\alpha k}}  \right)-    \frac{r}{\alpha c k}\left(x-\frac{k-\beta}{2}\right)Y_{\nu}\left(  \frac{2}{c}\sqrt{\frac{ry}{\alpha k}}  \right)  \right )=0. \end{multline*}
In  the shorthand notation, 
$$c_1A(x,y)+c_2B(x,y)=0.$$

Assume that $U\subset\mathbb{C}^2$ is open and that $A$ and $B$ are not simultaneously zero on $U$.
Then, for any constants $(c_1,c_2)\neq(0,0)$, the relation
\[
c_1A(x,y)+c_2B(x,y)=0
\]
is invariant along the trajectories of system~\eqref{Ga1}.
In particular, on any connected component of $U$ where $B\neq0$, the function
\[
H(x,y)=\frac{A(x,y)}{B(x,y)}
\]
is constant along solutions of~\eqref{Ga1}, and therefore defines a first integral in $U$.
The cases $c_1=0$ or $c_2=0$ correspond to the invariant curves $B(x,y)=0$ and $A(x,y)=0$, respectively.
\lqqd


 \section*{Acknowledgements}

The author would like to thank the reviewers for their careful reading of the manuscript and for their constructive comments and suggestions, which helped improve the clarity and presentation of the paper.

\bibliography{biblio}

\end{document}